\newcommand{\T}{{\cal T}}
\newcommand{\Real}{\mathbb R}
\newcommand{\To}{\longrightarrow}
\newcommand {\cppp}{\mathfrak{X}(TM)}
\newcommand {\N}{\mathcal{N}}
\def\Section#1{\vspace{30truept}\addtocounter{section}{1}\setcounter{thm}{0}\setcounter{equation}{0}
{\noindent\Large\bf\arabic{section}.~~#1}\par \vspace{12pt}}
\newtheorem{thm}{Theorem}[section]
\newtheorem{cor}[thm]{Corollary}
\newtheorem{lem}[thm]{Lemma}
\newtheorem{prop}[thm]{Proposition}
\newtheorem{defn}[thm]{Definition}
\newtheorem{example}[thm]{Example}
\newtheorem{rem}[thm]{Remark}
\numberwithin{equation}{section}
\newcommand\undersym[2]{\raisebox{-7pt}{\tiny$#2$}{\kern-8pt}\mbox{$#1$}}
\newcommand\undersymm[2]{\raisebox{-8pt}{\tiny$#2$}{\kern-15pt}\mbox{$#1$}}
\newcommand\overast[1]{\raisebox{10pt}{\small$\ast$}{\kern-7.5pt}\mbox{$#1$}}
\newcommand\overlind[1]{\raisebox{10pt}{\small$\overline{{\hspace{2pt}}\star}$}{\kern-7.5pt}\mbox{$#1$}}
\newcommand\overlinc[1]{\raisebox{10pt}{\tiny$\overline{{\hspace{2pt}}\circ}$}{\kern-7.5pt}\mbox{$#1$}}
\newcommand\overlina[1]{\raisebox{10pt}{\small$\overline{{\hspace{1pt}}\ast}$}{\kern-7.5pt}\mbox{$#1$}}
\newcommand\overcirc[1]{\raisebox{10pt}{\tiny{$\circ$}}{\kern-7.5pt}\mbox{$#1$}}
\newcommand\overdiamond[1]{\raisebox{10pt}{\small$\star$}{\kern-7.5pt}\mbox{$#1$}}
\begin{document}
\title{\bf{Nullity distributions associated to \\Cartan connection}\footnote{ArXiv Number: }}
\author{\bf{ Nabil L. Youssef$^{\,1,2}$, A.
Soleiman$^{3}$ and S. G.
Elgendi$^{3}$}}
\date{}
%\thanks{\it Department of Mathematics, etc}
%\pagestyle{fancy}
             % End of preamble and beginning of text.
\maketitle                     % Produces the title.
\vspace{-1.16cm}
\begin{center}
{$^{1}$Department of Mathematics, Faculty of Science,\\ Cairo
University, Giza, Egypt}\\

\bigskip
$^{2}$Center for Theoretical Physics (CTP)\\
at the Britich University in Egypt (BUE)
\end{center}

\begin{center}
{$^{3}$Department of Mathematics, Faculty of Science,\\ Benha
University, Benha,
 Egypt}
\end{center}

\begin{center}
E-mails: nlyoussef@sci.cu.edu.eg, nlyoussef2003@yahoo.fr\\
{\hspace{2.4cm}}amr.hassan@fsci.bu.edu.eg, amrsoleiman@yahoo.com\\
{\hspace{1.8cm}}salah.ali@fsci.bu.edu.eg, salahelgendi@yahoo.com
\end{center}

\bigskip
\vspace{1cm} \maketitle
\smallskip
%{\vspace{1.1cm}}
\noindent{\bf \begin{center}
Abstract
\end{center}}

%\noindent
The Klein-Grifone approach to global Finsler geometry is adopted. The nullity distributions of the three curvature tensors of Cartan connection are investigated. Nullity distributions concerning certain relevant special Finsler spaces are considered. Concrete examples are given whenever the situation needs.

\bigskip
\vspace{7pt}
\medskip\noindent{\bf Keywords: \/} Barthel connection, Cartan Connection, nullity distribution, nullity foliation, index of nullity, completely integrable, auto-parallel, Berwald space, Landesberg space, h-isotropic space, $S_3$-like space.\\

\medskip\noindent{\bf  2010 AMS Subject Classification.\/} 53C60,
53B40, 58B20,  53C12.
\newpage
%%%%%%%%%%%%%%%%%%%%%%%%%%%%%%%%%%%%%%%%%%%%%%%%%%%%%%%% %%%%%%%%%%%%%%%%%%%%%%%%%%%%%%%%%%%%%%%%%%%%%%%%%%%%%%%%%%%%%%%%%%%%%%%%%%%%%%%%%%%
%\vspace{1cm}
%Introduction
%\vspace{30truept}\centerline{\Large\bf{Introduction}}\vspace{12pt}
%\begin{center}
%\textbf{Introduction}
%\end{center}

\Section{Introduction}

 Chern and Kuiper \cite{chern.k}  in 1952  defined a distribution on a Riemannian manifold M which assigns  to  each point $x\in M$,  the  subspace
$$\N_{R}(x) =  \{X\in  T_xM : R(X, Y)  = 0 , \, \forall \, Y\in T_xM\},$$
where ${R}$  is the curvature of  the Riemannian connection on $M$.   It  is called the
nullity space at $x$.  The distribution defined by the  subspace  $\N_x$ at each point $x$
of $M$ is called the nullity distribution $\N$ of  the Riemannian manifold $M$. The dimension $\mu_x$ of $\N_x$ is called the  index of nullity at $x$.  Chern
and Kuiper showed that  if $\mu_x$  is constant in a neighborhood then $\N$  constitutes
a completely  integrable distribution there, and that  the leaves  of  the  resulting
foliation are flat.  Later, Maltz  \cite{Maltz}  showed that  the  leaves are also totally  geodesic.

In 1972, Akbar Zadeh \cite{akbar.null.}, \cite{akbar.null.2} extended this work to  Finsler geometry adopting the \textbf{pullback approach} to global Finsler geometry. He studied the nullity distribution of the (classical) curvature of Cartan connection. Recently, Bidabad and  Refie-Rad \cite{bidabad} studied a more general case called k-nullity distribution in Finsler geometry.
On the other hand, in 1982, Youssef \cite{Nabil.2},  \cite{Nabil.1} studied the nullity distributions of the curvature tensors of Barthel
connection and Berwald connection, adopting the \textbf{Klein-Grifone approach }to global Finsler geometry.

In the present paper, we investigate the nullity distribution of the three curvature tensors of Cartan connection adopting the \textbf{Klein-Grifone approach} \cite{r21},\,\cite{r22}, and \cite{r27}. The paper is  organized as follows. In the first section, we give the necessary material that will be needed throughout
the present work. In particular, we give a brief account  on the Klein-Grifone approach to global Finsler geometry.
In the second section, we focus our attention on the most important properties and formulas related to the curvature tensors of  Cartan connection.
In the third section, we investigate  the nullity distribution (ND) $\N_R$ of the h-curvature tensor $R$ of Cartan connection, the nullity spaces being subspaces of the horizontal space. We show that the ND $\N_R$ is included in the ND of the curvature of Barthel connection and we show, by an example, that this inclusion is proper. We show that the ND $\N_R$ is  completely integrable and  the leaves of the nullity foliation are  auto-parallel and hence totally geodesic submanifolds.
In the Fourth and fifth sections, we study  the ND's  of the hv-curvature and  v-curvature of Cartan connection.   We show through examples that these ND's are  not completely integrable. Nevertheless, we   investigate necessary and sufficient conditions for such distributions  to be completely integrable.

It should be noted that in the pullback approach (\cite{akbar.null.}, \cite{akbar.null.2}) the ND of the classical curvature of Cartan connection is  completely integrable and, consequently, the ND's of the h-curvature, hv-curvature and v-curvature are completely integrable. However, in the Klein-Grifone approach the situation is different: the ND of the h-curvature is completely integrable whereas the ND's of the hv-curvature and v-curvature are not.

Throughout the paper, we give concrete examples whenever the situation needs. Moreover, we study ND's related to certain special Finsler spaces relevant to the situation under consideration.

%%%%%%%%%%%%%%%%%%%%%%%%%%%%%$$$$$$$$$$$$$$$$$$ $$$$$$$$$$$$$$$$$$$$$$$$$$$$$$$$&&&&&&&&&&&&&&&&&&&&&&&&&&&&&&&&&&&&&&&&&&&&&&&&&&
\Section{Notation and Preliminaries}
In this section, we give a brief account of the basic concepts
 of the Klein-Grifone approach to global Finsler geometry. For more
 details, we refer to \cite{r21},\,\cite{r22}, and \cite{r27}.
 We make the
assumption that the geometric objects we consider are of class
$C^{\infty}$.\\ The
following notations will be used throughout this paper:\\
 $M$: a real differentiable manifold of finite dimension $n$ and of
class $C^{\infty}$,\\
  $\mathfrak{F}(M)$: the $\Real$-algebra of differentiable functions
on $M$,\\
 $\mathfrak{X}(M)$: the $\mathfrak{F}(M)$-module of vector fields
on $M$,\\
$\pi_{M}:TM\longrightarrow M$: the tangent bundle of $M$,\\
$\pi: \T M\longrightarrow M$: the subbundle of nonzero vectors
tangent to $M$,\\
$V(TM)$: the vertical subbundle of the bundle $TTM$,\\
$ i_{X}$ : the interior product with respect to  $X
\in\mathfrak{X}(M)$,\\
$df$ : the exterior derivative  of $f$,\\
$ d_{L}:=[i_{L},d]$, $i_{L}$ being the interior derivative with
respect to a vector form $L$,\\
$\mathcal{L}_X$ : the Lie derivative  with respect to $X\in\mathfrak{{X}}(M)$.\\

We have the following short exact sequence of vector bundles,
relating the tangent bundle $T(T M)$ and the pullback bundle
$\pi^{-1}(TM)$:\vspace{-0.1cm}
$$0\longrightarrow
 \pi^{-1}(TM)\stackrel{\gamma}\longrightarrow T(TM)\stackrel{\rho}\longrightarrow
\pi^{-1}(TM)\longrightarrow 0 ,\vspace{-0.1cm}$$
 where the bundle morphisms $\rho$ and $\gamma$ are defined respectively by
$\rho := (\pi_{\T M},d\pi)$ and $\gamma (u,v):=j_{u}(v)$, where
$j_{u}$  is the natural isomorphism $j_{u}:T_{\pi_{M}(v)}M
\longrightarrow T_{u}(T_{\pi_{M}(v)}M)$. The vector $1$-form $J$ on
$TM$ defined by $J:=\gamma\circ\rho$ is called the natural almost
tangent structure of $T M$. The vertical vector field ${C}$
on $TM$ defined by ${C}:=\gamma\circ\overline{\eta}$, where $\overline{\eta}$
is the vector field on $\pi^{-1}(TM)$ given by $\overline{\eta}(u)=(u,u)$, is
called the fundamental or the canonical (Liouville) vector field.

\par

\par

In this work, we shall need the evaluation of the
Fr\"{o}licher-Nijenhuis  bracket  in some special cases \cite{r20}:

 If $L$ is a vector $\ell$-form  and $X\in \mathfrak{X}(M)$, then, for all $Y_{1}, ..., Y_{\ell}\in
\mathfrak{X}(M)$,
$$[X,L](Y_{1},...,Y_{\ell})=[X, L(Y_{1},...,Y_{\ell})]-\sum_{i=1}^{\ell}
L(Y_{1},..., [X,Y_{i}],...,Y_{\ell}).$$
In particular, if  $L$ is  vector $1$-form,
$$[X,L]Y=[X, LY]-L[X,Y].$$

 \ If $K$ and $L$ are vector $1$- forms, then
\begin{eqnarray*}
% \nonumber to remove numbering (before each equation)
  [K,L](X,Y)&=& [KX,LY]+[LX,KY]+K L[X,Y]+L K[X,Y] \\
   && -K[L X,Y]-K[ X,L Y]-L[K X,Y]-L[ X,K Y].
\end{eqnarray*}
In particular, the vector $2$-form $N_{K}:=\frac{1}{2}[K,K]$ is
said to be the Nijenhuis torsion of the vector $1$-form $K$:
\begin{equation}\label{Nk}
     N_{K}:= \frac{1}{2}[K,K](X,Y)=[KX,KY]+K^{2}[X,Y] -K[K X,Y]-K[ X,K
     Y].
\end{equation}

One can show that the natural almost tangent structure $J$ has the
properties:
\begin{equation}\label{JJ}
  J^{2}=0, \ \ [J,J]=0, \ \  [{C}, J] = -J,
  \ \ \text{Im}(J) = Ker (J) = V (T M),
\end{equation}

 A scalar  $p$-form $\omega$ on $TM$ is semi-basic if $ i_{JX}\omega=0,\,\, \forall X\in \cppp$. A vector $\ell$-form $L$ on $TM$ is semi-basic if $ JL=0\,\, \text{and}\,\, i_{JX}L=0, \,\, \forall X\in \cppp$.

 A scalar  $p$-form $\omega$ on $TM$ is  homogenous of degree r  if $\mathcal{L}_C\omega=r\omega$. A vector $\ell$-form $L$ on $TM$ is homogenous of degree r, denoted h(r), if $[C,L]=(r-1)L$. It is clear that $J$ is h(0).

 A semispray  on $M$ is a vector field $S$ on $TM$,
 $C^{\infty}$ on $\T M$, $C^{1}$ on $TM$, such that
$JS = C$. A semispray $S$ which is
homogeneous of degree $2$
($[{C},S]= S $) is called a spray.

A nonlinear connection on $M$ is a vector $1$-form $\Gamma$ on $TM$,
$C^{\infty}$ on $\T M$, $C^{0}$ on $TM$, such that
$$J \Gamma=J, \quad\quad \Gamma J=-J .$$
The vertical and horizontal   projectors $v$\,  and
$h$ associated with $\Gamma$ are defined respectively  by
   $v:=\frac{1}{2}
 (I-\Gamma),\, h:=\frac{1}{2} (I+\Gamma).$
Thus $\Gamma$ gives rise to the direct sum decomposition $TT M=
V(TM)\oplus H(TM)$, where $\,\,V(TM):= Im \, v=Ker \, h$ is the vertical bundle and   $H(TM):=Im \, h = \ker\,
v $ is the horizontal bundle induced by $\Gamma$. An element of $V(TM)$ (resp. $H(TM)$) will be denoted by $vX$  (resp. $hX$). We
have $J v=0, \,\,\, v J=J, \,\,\, J h =J, \,\,\, h J=0.$  A nonlinear
connection $\Gamma$ is homogeneous if $[{C},\Gamma]=0$.  To each  nonlinear connection $\Gamma$,
one can associate a semispray $S$ which is horizontal with respect
to $\Gamma$, namely, $S=hS'$, where $S'$ is an arbitrary
semispray. Moreover, if $\Gamma$ is homogeneous, then its associated
semispray is a spray.

The torsion $t$ of a nonlinear connection $\Gamma$ is the vector
$2$-form  on $TM$ defined by $t:=\frac{1}{2} [J,\Gamma]$.
The curvature of $\Gamma$ is the vector $2$-form on $TM$ defined by
$\mathfrak{R}:=-\frac{1}{2}[h,h]$. Associated with $\Gamma$,
an almost complex structure $F$ $(F^2=-I)$ is  defined by $FJ=h$ and $Fh=-J$. This
$F$ defines an isomorphism of $T_zTM$  for all $z\in TM$.   \\

\begin{defn}\label{Finsler} \emph{\cite{r27}} A Finsler space
 of dimension $n$ is a pair $(M,E)$,
where $M$ is a
 differentiable manifold of dimension $n$ and $E$ is a map
 $$E: TM \To \Real ,\vspace{-0.1cm}$$
called the energy function,  satisfying the axioms{\em:}
 \begin{description}
    \item[\em{\textbf{(a)}}] $E(u)>0 $ for all $u\in \T M$ and $E(0)=0$,
    \item[\em{\textbf{(b)}}] $E $ is  $C^{\infty}$ on  $\T M$, $C^{1}$ on $TM$,
    \item[\em{\textbf{(c)}}]$E$ is homogenous of degree $2${\em:}
    $\mathcal{L}_{{C}} E=2E$,
    \item[\em{\textbf{(d)}}] The exterior  $2$-form
    $\Omega:=dd_{J}E$, called the fundamental form,  has maximal rank.
     \end{description}
 \end{defn}
\begin{thm}{\em{\cite{r27}}}\label{spray} Let $(M,E)$ be a Finsler space. The vector field $S\in \cppp$
 defined by $i_{S}\Omega =-dE$ is a spray.
 Such a spray is called the canonical spray  associated with $(M,E)$.
 \end{thm}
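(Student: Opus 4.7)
The strategy is to split the statement into three steps---existence and uniqueness of $S$, the semispray identity $JS=C$, and the homogeneity $[C,S]=S$---each of which reduces to the non-degeneracy of $\Omega$.

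\emph{Step 1 (Existence and uniqueness).} Since $\Omega=dd_{J}E$ has maximal rank, the $\mathfrak{F}(TM)$-linear map $X\mapsto i_{X}\Omega$ on $\mathfrak{X}(TM)$ is injective, so $i_{S}\Omega=-dE$ determines a unique $S\in\mathfrak{X}(TM)$.

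\emph{Step 2 ($JS=C$).} I would show $i_{JS}\Omega=i_{C}\Omega$ and conclude by non-degeneracy. First, using Cartan's formula $\mathcal{L}_{C}=i_{C}d+di_{C}$, the identity $JC=0$, the homogeneity $\mathcal{L}_{C}E=2E$, and the Fr\"{o}licher--Nijenhuis commutation $\mathcal{L}_{C}d_{J}-d_{J}\mathcal{L}_{C}=d_{[C,J]}=-d_{J}$ (from $[C,J]=-J$ in \eqref{JJ}), a short computation gives
\[
i_{C}\Omega=\mathcal{L}_{C}d_{J}E-d(i_{C}d_{J}E)=d_{J}E.
\]
Second, from $J^{2}=0$ and the vanishing Nijenhuis torsion $N_{J}=\tfrac{1}{2}[J,J]=0$, expanding $dd_{J}E(JX,Y)+dd_{J}E(X,JY)$ by the global formula for $d$ and rewriting the cross-term $dE(J[JX,Y]+J[X,JY])$ as $[JX,JY](E)$ yields the $J$-skew-symmetry
\[
\Omega(JX,Y)+\Omega(X,JY)=0,\qquad X,Y\in\mathfrak{X}(TM).
\]
Combining this with $i_{S}\Omega=-dE$ gives $\Omega(JS,Y)=-\Omega(S,JY)=dE(JY)=d_{J}E(Y)$, so $i_{JS}\Omega=d_{J}E=i_{C}\Omega$, whence $JS=C$.

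\emph{Step 3 ($[C,S]=S$).} Applying $\mathcal{L}_{C}$ to $i_{S}\Omega=-dE$ gives $-d(\mathcal{L}_{C}E)=-2dE$ on the right. On the left, Cartan's formula $\mathcal{L}_{C}i_{S}=i_{[C,S]}+i_{S}\mathcal{L}_{C}$ combined with $\mathcal{L}_{C}\Omega=d(\mathcal{L}_{C}d_{J}E)=d(d_{J}E)=\Omega$ (which follows from the computation of $\mathcal{L}_{C}d_{J}E$ in Step 2) yields
\[
i_{[C,S]}\Omega+i_{S}\Omega=-2dE,
\]
so $i_{[C,S]}\Omega=-dE=i_{S}\Omega$, and non-degeneracy of $\Omega$ forces $[C,S]=S$.

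\emph{Main obstacle.} The only non-formal ingredient is the $J$-skew-symmetry $\Omega(JX,Y)+\Omega(X,JY)=0$; every other step is a routine application of Cartan's formula, the Fr\"{o}licher--Nijenhuis commutation rules, and the non-degeneracy axiom, together with the homogeneity $\mathcal{L}_{C}E=2E$. Deriving that skew-symmetry uses $[J,J]=0$ and $J^{2}=0$ in an essential way, the former to collapse the Lie-bracket cross-terms and the latter to kill $d_{J}E(JX)$.
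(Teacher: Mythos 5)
The paper offers no proof of this theorem; it is quoted verbatim from Klein--Voutier \cite{r27} (see also Grifone \cite{r21}), so there is no internal argument to compare against. Your proof is correct and is essentially the classical one from those sources: uniqueness of $S$ from the maximal rank of $\Omega$; the identities $i_{C}\Omega=d_{J}E$ and $\Omega(JX,Y)+\Omega(X,JY)=0$ (the latter resting, as you say, on $[J,J]=0$ and $J^{2}=0$) combine with $i_{S}\Omega=-dE$ to give $JS=C$; and $\mathcal{L}_{C}\Omega=\Omega$ together with $\mathcal{L}_{C}E=2E$ gives $i_{[C,S]}\Omega=i_{S}\Omega$, hence $[C,S]=S$. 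All steps check out.
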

\par
   Now, we give a fundamental result which ensures the
existence and uniqueness of a remarkable nonlinear connection.
\begin{thm}{\em{\cite{r27}}}\label{Barthel} On a Finsler space $(M,E)$, there exists a unique
conservative \emph{(}$d_hE=0$\emph{)} homogeneous nonlinear  connection with zero torsion.
It is given by{\em:} \vspace{-0.1cm}
$$\Gamma = [J,S],\vspace{-0.1cm}$$
where $S$ is the canonical spray.
Such a connection is called the canonical connection,  Barthel
connection  or  Cartan
nonlinear connection
associated with $(M,E)$.
\end{thm}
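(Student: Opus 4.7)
The plan is to prove existence by showing the explicit $\Gamma := [J,S]$ satisfies all four requirements (nonlinear connection, zero torsion, homogeneous, conservative), and then to prove uniqueness by analyzing the difference of two such connections.

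For existence, I would first unpack the Frölicher–Nijenhuis bracket to rewrite
\[
\Gamma Y = [J,S]Y = J[S,Y] - [S,JY],
\]
and then verify the two nonlinear connection axioms $J\Gamma = J$ and $\Gamma J = -J$. Both reduce to a single identity, namely $J[S,JY] = -JY$. This identity is obtained by expanding $[J,J](S,Y) = 0$ using \eqref{Nk} with $K=J$, observing that $J^{2}=0$ collapses several terms, and then applying the relation $[C,J] = -J$ together with $JS = C$ from \eqref{JJ}. Once this identity is in hand, $J\Gamma Y = -J[S,JY] = JY$ and $\Gamma(JX) = J[S,JX] = -JX$ follow immediately, so $\Gamma$ is a nonlinear connection.

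The remaining three properties fall out of bracket identities. For \emph{zero torsion}, the graded Jacobi identity for the Frölicher–Nijenhuis bracket applied to the pair $(J,J,S)$ yields $2[J,[J,S]] = [[J,J],S] = 0$, because $[J,J]=0$ by \eqref{JJ}; hence $t = \tfrac12[J,\Gamma] = 0$. For \emph{homogeneity}, I use that $\mathcal{L}_C$ is a derivation of the Frölicher–Nijenhuis bracket: combining $[C,J] = -J$ with the spray condition $[C,S] = S$ gives
\[
[C,\Gamma] = [C,[J,S]] = [[C,J],S] + [J,[C,S]] = -[J,S] + [J,S] = 0.
\]
For the \emph{conservative} property $d_{h}E = 0$, I would compute $d_h E(X) = hX(E) = \tfrac12(X + \Gamma X)(E)$ and show it vanishes by pairing $i_S\Omega = -dE$ with $\Omega = dd_JE$ via Cartan's formula, using $Jh = 0$, $JS = C$, and the homogeneity $\mathcal{L}_C E = 2E$ from Definition~\ref{Finsler}(c). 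The computation essentially reproduces the variational characterization of the canonical spray and cancels after exploiting $2$-homogeneity of $E$; this is the step I expect to be the main obstacle, since unlike the other three properties it is not a pure bracket manipulation but uses the specific defining relation $i_S\Omega = -dE$ of Theorem~\ref{spray}.

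For uniqueness, suppose $\Gamma'$ is another conservative, homogeneous, torsion-free nonlinear connection, and set $D := \Gamma' - \Gamma$. From $J\Gamma' = J\Gamma = J$ and $\Gamma'J = \Gamma J = -J$ I obtain $JD = 0$ and $DJ = 0$, so $D$ is a semi-basic vector $1$-form. Homogeneity of both connections gives $[C,D] = 0$ (so $D$ is $h(1)$), vanishing of both torsions gives $[J,D] = 0$, and conservativity gives $d_{D}E = 0$, i.e.\ $DX(E) = 0$ for all $X$. A semi-basic $h(1)$ vector $1$-form $D$ with $[J,D] = 0$ is completely determined by its symmetric ``second derivative'' with respect to the fundamental form $\Omega$; the non-degeneracy of $\Omega$ (Definition~\ref{Finsler}(d)) together with $d_DE = 0$ then forces $D = 0$, so $\Gamma' = \Gamma$ and the connection is unique.
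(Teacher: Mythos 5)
The paper does not prove this theorem at all --- it is quoted from the Klein--Grifone literature (\cite{r21}, \cite{r27}) --- so there is no internal proof to compare against; I can only assess your argument on its own terms. The existence half is correct and essentially complete where you have written it out: the identity $J[S,JY]=-JY$ does follow from expanding $[J,J](S,Y)=0$ via \eqref{Nk} with $J^{2}=0$, $JS=C$ and $[C,J]=-J$, and it yields both $J\Gamma=J$ and $\Gamma J=-J$; the graded Jacobi identity gives $2[J,[J,S]]=[[J,J],S]=0$, hence $t=0$; and $[C,[J,S]]=[[C,J],S]+[J,[C,S]]=-[J,S]+[J,S]=0$ gives homogeneity. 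The conservativity step is only sketched, but the sketch points at the right computation: expanding $\Omega(S,hX)$ with $\Omega=dd_{J}E$, $JS=C$ and $\mathcal{L}_{C}E=2E$ reduces $2(hX)E$ to $(JX)(SE)$, and $SE=-\Omega(S,S)=0$ closes it.

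The soft spot is uniqueness. Your reduction to the difference tensor $D=\Gamma'-\Gamma$ (semi-basic, $[J,D]=0$, $[C,D]=0$, $d_{D}E=0$) is a legitimate alternative to the standard route, which instead shows that any torsion-free homogeneous connection equals $[J,S_{0}]$ for its associated spray $S_{0}$ and that conservativity forces $i_{S_{0}}\Omega=-dE$, i.e.\ $S_{0}=S$. But your final sentence asserts the conclusion rather than proving it, and as stated it attributes the conclusion to the wrong hypotheses: non-degeneracy of $\Omega$ together with $d_{D}E=0$ alone give only the $n$ scalar relations $D^{i}_{j}\,\partial E/\partial y^{i}=0$ on the $n^{2}$ components of $D$, which is nowhere near forcing $D=0$. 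All four hypotheses must cooperate. Locally, $[J,D]=0$ says $\partial D^{i}_{j}/\partial y^{k}$ is symmetric in $j,k$, which together with $[C,D]=0$ and Euler's theorem gives a potential $G^{i}=\tfrac{1}{2}D^{i}_{j}y^{j}$ with $D^{i}_{j}=\partial G^{i}/\partial y^{j}$ and $G^{i}$ homogeneous of degree $2$. Setting $H:=G^{i}\,\partial E/\partial y^{i}$, the condition $d_{D}E=0$ gives $\partial H/\partial y^{j}=G^{i}g_{ij}$, while $H$ is homogeneous of degree $3$, so Euler yields $3H=y^{j}\partial H/\partial y^{j}=G^{i}g_{ij}y^{j}=H$, whence $H=0$, then $G^{i}g_{ij}=0$, and non-degeneracy of $g$ finally kills $G$ and hence $D$. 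With that argument (or the associated-spray argument) supplied in place of the asserted last step, your plan is sound.
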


It should be noted that the semi-spray associated with the Barthel
connection is a spray, which is  the canonical spray.

%%%%%%%%%%%%%%%%%%%%%%%%%%%%%%%%%%%%%%%%%%%%%%%%%%%%%%%%%%%%%%%%%%%%%%%%%&&&&&&&&&&&&&&&&&&&&&&&&&&&&&&&&&&&&&&&&&&&&&&&&&&&&&&&&&&&&&&&&
\Section{Berwald  and Cartan connections}

In this section, we present the necessary material, concerning Berwald and Cartan connections, that will be needed throughout
the present work.    For more details, we refer to \cite{r22} and \cite{Nabil.1}.

\begin{thm} \cite{r22} For a Finsler space  $(M,E)$,
there exists a unique linear connection  \, $\overcirc{D}$ on $TM$  satisfying the following properties:
\begin{description}
                  \item[(a)]$\overcirc{D}J=0$.\hspace{5.4cm}\em{\textbf{(b)}}\, $\overcirc{D}C=v$.
                  \item[(c)] $\overcirc{D}\Gamma=0\,\,
                  (\Longleftrightarrow\overcirc{D}h=\overcirc{D}v=0  )$. \hspace{1.7cm}{\textbf{(d)}}\, $\overcirc{D}_{JX}JY=J[JX,Y]$.
                  \item[(e)]$\overcirc{T}(JX,Y)=0$,
 \end{description}
 where $h$ and $v$ are the horizontal and vertical projectors
of Barthel connection and \, $\overcirc{T}$ is the (classicl) torsion of \, $\overcirc{D}$. This connection is called the Berwald connection.
\end{thm}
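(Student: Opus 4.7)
The plan is to first establish uniqueness by extracting explicit formulas for $\overcirc{D}_XY$ from the five axioms, then to prove existence by taking those formulas as a definition and verifying the linear-connection axioms along with (a)--(e).

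For uniqueness, I would exploit the decomposition $TTM=H(TM)\oplus V(TM)$: since $\overcirc{D}h=\overcirc{D}v=0$ by (c), the operator preserves the horizontal and vertical bundles, so $\overcirc{D}_XY$ is completely determined once its values on the four pure pairs $(hX,hY)$, $(hX,JY)$, $(JX,hY)$, $(JX,JY)$ are known. Axiom (d) directly provides the $(JX,JY)$ case. For $(JX,hY)$, torsion-freeness (e) combined with $\overcirc{D}_{hY}JX=J\overcirc{D}_{hY}X$ (from (a)) produces
$$\overcirc{D}_{JX}hY=J\overcirc{D}_{hY}X+[JX,hY].$$
The left-hand side is horizontal while $J\overcirc{D}_{hY}X$ is vertical, so the horizontal projection yields $\overcirc{D}_{JX}hY=h[JX,hY]$, and the vertical projection yields, after swapping the names, $\overcirc{D}_{hX}JY=v[hX,JY]$. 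Finally, by (a) once more, $\overcirc{D}_{hX}JY=J\overcirc{D}_{hX}hY$ (the vertical contribution $\overcirc{D}_{hX}vY$ dies under $J$), so $J\overcirc{D}_{hX}hY=v[hX,JY]$; inverting $J|_{H(TM)}\colon H(TM)\to V(TM)$ via the map $F|_{V(TM)}$ (which satisfies $JFZ=Z$ for $Z\in V(TM)$, thanks to $FJ=h$ and $Jh=J$) recovers $\overcirc{D}_{hX}hY=F\bigl(v[hX,JY]\bigr)$. These four explicit formulas force uniqueness of $\overcirc{D}$.

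For existence, I would adopt those four formulas as the definition of $\overcirc{D}$ on pure pieces and extend by bilinearity using $X=hX+vX$, $Y=hY+vY$. The subsequent verification proceeds axiom by axiom: (c) and (d) are built into the definition; (a) follows from the algebraic identities $Jh=J$ and $Jv=0$; (e) is equivalent to the coherence of the formulas for the two mixed cases derived above; and (b), after writing $C=JS$ for the canonical spray $S$ of Theorem~\ref{spray}, reduces to the homogeneity relations $[C,\Gamma]=0$ and $[C,J]=-J$ guaranteed by Theorem~\ref{Barthel} (the first gives $[hX,C]$ horizontal and hence $v[hX,C]=0$; the second, combined with $[J,J]=0$, yields $J[JX,S]=JX$).

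The main obstacle is the Leibniz rule in the case $(hX,hY)$: one must show that $F\bigl(v[hX,f\,JY]\bigr)=hX(f)\,hY+f\,F\bigl(v[hX,JY]\bigr)$ for every $f\in\mathfrak{F}(TM)$. This hinges on expanding $[hX,f\,JY]=hX(f)JY+f[hX,JY]$, observing that $hX(f)JY$ is already vertical so $v$ leaves it fixed, and then using $FJ=h$ to convert $F(JY)=hY$, which produces precisely the Leibniz term. Together with the Nijenhuis identity $[J,J]=0$ to control brackets of the form $[JX,hY]$ appearing in the mixed cases, this completes the verification.
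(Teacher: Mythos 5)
Your proposal is correct in substance, but there is nothing in the paper to compare it against: the theorem is quoted from Grifone \cite{r22} and the paper supplies no proof, recording only the explicit expression (\ref{berwaldconn.}). Your uniqueness argument recovers exactly that expression: the four pure-piece formulas you extract, namely $\overcirc{D}_{JX}JY=J[JX,Y]$, $\overcirc{D}_{hX}JY=v[hX,JY]$, $\overcirc{D}_{JX}hY=h[JX,hY]$ and $\overcirc{D}_{hX}hY=Fv[hX,JY]$, are precisely the content of the three lines of (\ref{berwaldconn.}) once one unpacks $\overcirc{D}F=0$ via $hY=F(JY)$. The individual steps check out: the splitting via (c), the use of (e) plus (a) to pin down the mixed cases by projecting onto $H(TM)$ and $V(TM)$, the inversion of $J|_{H(TM)}$ by $F|_{V(TM)}$, the Leibniz computation for $Fv[hX,fJY]$, and the reduction of (b) to $[C,\Gamma]=0$ and $[C,J]=-J$ (your identity $J[JX,S]=JX$ does follow from these together with $[J,J]=0$, or more directly from $J\Gamma=J$ with $\Gamma=[J,S]$). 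One imprecision worth fixing: you locate the role of $[J,J]=0$ in ``brackets of the form $[JX,hY]$ appearing in the mixed cases,'' but that is not where it is needed. It is needed (i) to see that $J[JX,Y]$ depends only on $JY$ and not on the representative $Y$ (if $JY=JY'$ then $Y-Y'=JW$ and $[J,J]=0$ gives $[JX,JW]\in\mathrm{Im}\,J=\ker J$), which is the well-definedness of your first formula, and (ii) to verify the vertical--vertical instance $\overcirc{T}(JX,JZ)=0$ of axiom (e), which your phrase ``coherence of the two mixed cases'' does not cover since (e) is asserted for all second arguments, vertical ones included. With those two points made explicit the argument is complete.
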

\par
The explicit expression  of\,  $\overcirc{D}$ is given by:
\begin{equation}\label{berwaldconn.}
  \left.
    \begin{array}{rcl}
  \overcirc{D}_{JX}JY&=&J[JX,Y],\\
\overcirc{D}_{hX}JY&=&v[hX,JY],\\
  \overcirc{D}F&=&0.
 \end{array}
  \right\}
\end{equation}
\begin{lem}
The  Berwald connection has the property that
$$\overcirc{T}(hX,hY)=\mathfrak{R}(X,Y),$$
where $\mathfrak{R}$ is the curvature of Barthel connection.
\end{lem}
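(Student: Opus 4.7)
The plan is to decompose $\overcirc{T}(hX, hY) = \overcirc{D}_{hX} hY - \overcirc{D}_{hY} hX - [hX, hY]$ into its horizontal and vertical components and to verify that the horizontal component vanishes while the vertical component coincides with $\mathfrak{R}(X, Y)$.

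First I would determine $\overcirc{D}_{hX} hY$ using the explicit formulas (\ref{berwaldconn.}). Since $FJ = h$, one can write $hY = F(JY)$, and then $\overcirc{D} F = 0$ together with $\overcirc{D}_{hX} JY = v[hX, JY]$ gives
\[
\overcirc{D}_{hX} hY \;=\; F\,\overcirc{D}_{hX}(JY) \;=\; F\,v[hX, JY].
\]
Because $F$ sends the vertical subbundle to the horizontal one (every vertical vector has the form $JZ$ and $F(JZ) = hZ$), the quantity $F v[hX, JY]$ is horizontal; the same holds for $F v[hY, JX]$. Hence the vertical part of $\overcirc{T}(hX, hY)$ reduces to $-v[hX, hY]$.

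Next I would identify $-v[hX, hY]$ with $\mathfrak{R}(X,Y)$. Expanding $\mathfrak{R} = -\tfrac{1}{2}[h,h]$ with the general Frölicher--Nijenhuis formula and using $h^2 = h$ yields
\[
\mathfrak{R}(X,Y) \;=\; -[hX,hY] + h[hX,Y] + h[X,hY] - h[X,Y].
\]
The elementary observations that $h[vU, vW] = 0$ (the bracket of two vertical vector fields on $TM$ is vertical) and $h[hU, vW] = 0$ (in coordinates $[\delta/\delta x^i, \partial/\partial y^j]$ is vertical) allow one to replace every occurrence of $[X, Y]$, $[hX, Y]$, $[X, hY]$ by $[hX, hY]$ modulo the vertical distribution; the four terms then collapse to $-[hX, hY] + h[hX, hY] = -v[hX, hY]$, as desired.

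The main obstacle is the vanishing of the horizontal part of $\overcirc{T}(hX, hY)$, i.e. the identity
\[
F v[hX, JY] - F v[hY, JX] \;=\; h[hX, hY].
\]
This is where the torsion-free property of the Barthel connection, $t = \tfrac{1}{2}[J, \Gamma] = 0$, equivalently $[J, h] = 0$, enters decisively. Expanding $[J, h](X, Y) = 0$ via the Frölicher--Nijenhuis formula and simplifying with $Jh = J$ and $hJ = 0$, then specializing both arguments to horizontal vectors, yields precisely the required symmetry; in local coordinates this is the well-known symmetry $G^k_{ij} = G^k_{ji}$ of the Berwald coefficients. Assembling the three pieces then delivers $\overcirc{T}(hX, hY) = -v[hX, hY] = \mathfrak{R}(X, Y)$.
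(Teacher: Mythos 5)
The paper states this lemma without proof, so there is no in-text argument to compare against; your strategy is, however, exactly the technique the paper itself uses for the closely related identity in Lemma \ref{car.[]} \textbf{(c)}. The two load-bearing steps are handled correctly: from $\overcirc{D}F=0$ and $\overcirc{D}_{hX}JY=v[hX,JY]$ you get $\overcirc{D}_{hX}hY=Fv[hX,JY]$, which is horizontal; and the vanishing of the torsion $t=\frac{1}{2}[J,\Gamma]$, evaluated on horizontal arguments, gives $v[JX,hY]+v[hX,JY]-J[hX,hY]=0$ and hence $Fv[hX,JY]-Fv[hY,JX]=h[hX,hY]$, which is precisely the identity the paper derives in the proof of Lemma \ref{car.[]} \textbf{(c)}. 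So the horizontal part of $\overcirc{T}(hX,hY)$ cancels and the vertical part is $-v[hX,hY]$. Note that the identity $\mathfrak{R}(X,Y)=-v[hX,hY]$ is simply quoted by the paper from \cite{Nabil.2}, so your re-derivation of it is optional.

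That re-derivation does contain one false intermediate claim: $h[hU,vW]=0$ does not hold for arbitrary vector fields. Writing $hU=a^i\,\delta/\delta x^i$ and $vW=b^j\,\partial/\partial y^j$, the bracket contains the horizontal term $-b^j(\partial a^i/\partial y^j)\,\delta/\delta x^i$, which survives whenever the coefficients $a^i$ depend on the fibre coordinates; only the brackets of the \emph{basis} fields $[\delta/\delta x^i,\partial/\partial y^j]$ are vertical, and that does not extend to general $U,W$. Your conclusion nevertheless stands, because in the combination $h[hX,Y]+h[X,hY]-h[X,Y]$ the offending terms $h[hX,vY]$ and $h[vX,hY]$ cancel in pairs, leaving $h[hX,hY]-h[vX,vY]=h[hX,hY]$ by the integrability of the vertical distribution; this yields $\mathfrak{R}(X,Y)=-[hX,hY]+h[hX,hY]=-v[hX,hY]$ as desired. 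With that repair --- or by simply citing the identity from \cite{Nabil.2} as the paper does --- the proof is complete.
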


\par
Let $(M,E)$ be a  Finsler space and  $\Omega:=dd_{J}E$. The map $\overline{g}$ defined by
$$\overline{g}(J X,J Y):=\Omega(JX,Y), \ \forall \ X, Y \in  T(TM)$$
defines a  metric  on $V(TM)$. This metric   can be extended to a metric $g$ on $T(TM)$ defined by the formula:

 \begin{equation}\label{metricg}
 g(X,Y)=\overline{g}(JX,JY)+\overline{g}(vX,vY)=\Omega(X,FY).
\end{equation}

\begin{thm}\cite{r22} For a Finsler space $(M,E)$, there exists a unique linear connection  ${D}$ on $TM$ satisfying the following properties:
\begin{description}
                  \item[(a)]${D}J=0$.\hspace{4.4cm} \em{\textbf{(b)}} ${D}C=v$.
                  \item[(c)] ${D}\Gamma=0 \,\,(\Longleftrightarrow {D}h={D}v=0 )$.\hspace{.8cm}\textbf{(d)} ${D}g=0$.
                  \item[(e)] ${T}(JX,JY)=0$.\hspace{3.25cm}\textbf{(f)} $JT(hX,hY)=0$.
 \end{description}
 This connection  is called the Cartan  connection.
 \end{thm}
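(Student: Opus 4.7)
The plan is a Koszul-type argument adapted to the almost-tangent and horizontal--vertical bigraded structure on $TM$.

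\textbf{Reduction.} Conditions (a) and (c) imply that $D$ is determined by its two vertical-valued parts. Indeed, $Dh=0$ and $DJ=0$ give $D_{hX}hY = hD_{hX}Y$ and $D_{hX}JY = JD_{hX}Y$; hence $J(D_{hX}hY) = Jh\,D_{hX}Y = JD_{hX}Y = D_{hX}JY$, and since $J$ restricts to an isomorphism $H(TM)\to V(TM)$ whose inverse is $F|_{V(TM)}$, we recover $D_{hX}hY = F\,\omega(X,Y)$ with
\[
\omega(X,Y):=D_{hX}JY.
\]
Analogously $D_{JX}hY = F\,\sigma(X,Y)$ where $\sigma(X,Y):=D_{JX}JY$. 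Thus the whole connection is encoded by the two vertical-valued tensors $\omega$ and $\sigma$.

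\textbf{Uniqueness.} The torsion axioms supply the skew parts of $\omega$ and $\sigma$. Applying $J$ to $T(hX,hY)$ and using condition (f) together with the reduction above yields
\[
\omega(X,Y)-\omega(Y,X)=J[hX,hY],
\]
and (e) combined with the involutivity of $V(TM)$ yields $\sigma(X,Y)-\sigma(Y,X)=[JX,JY]$. Since $g$ is non-degenerate and $\omega,\sigma$ take values in $V(TM)$, it suffices to compute $g(\omega(X,Y),JZ)$ and $g(\sigma(X,Y),JZ)$. Forming the cyclic sum
\[
(hX)\,g(JY,JZ)+(hY)\,g(JX,JZ)-(hZ)\,g(JX,JY),
\]
expanding each term via (d) and substituting the skew relation above gives a unique expression for $g(\omega(X,Y),JZ)$. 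The parallel computation with $hX,hY,hZ$ replaced by $JX,JY,JZ$ determines $g(\sigma(X,Y),JZ)$; condition (b), specialised at $JY=C$, fixes the normalisation in the Liouville direction.

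\textbf{Existence.} I would take the explicit formulas for $\omega$ and $\sigma$ produced by these Koszul identities and extend them to all of $T(TM)$ via the relations of the Reduction step. Schematically these will be the Berwald expressions $J[JX,Y]$ and $v[hX,JY]$ plus symmetric corrections built from the Cartan-type tensor $D\bar g$. Conditions (a), (c) are then built in, (e) and (f) follow from the prescribed skew parts, (b) reduces to $\mathcal{L}_CE=2E$, and (d) is obtained by reversing the Koszul derivation.

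The main obstacle I anticipate is the bookkeeping in the existence step, specifically verifying (d). Because $g$ is built from $\Omega=dd_JE$, which mixes the two bigrades, showing that $(hZ)\,g(JX,JY) = g(\omega(Z,X),JY) + g(JX,\omega(Z,Y))$ for the prescribed $\omega$ will require combining the Barthel conservation law $d_hE=0$ from Theorem \ref{Barthel} with the Fr\"olicher--Nijenhuis identities $[J,J]=0$ and $[J,\G]=0$ recalled in Section 2.
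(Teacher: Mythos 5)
The paper does not actually prove this theorem: it is imported from Grifone \cite{r22}, and the only proof-relevant content the paper records is the explicit expression (\ref{cartanconn.}) of $D$ as the Berwald connection (\ref{berwaldconn.}) corrected by the symmetric tensors $\mathcal{C}$, $\mathcal{C}'$ defined through Lie derivatives of $g$. Your Koszul-type plan is the standard route to exactly that formula, and its uniqueness half is essentially correct: the reduction of $D$ to the vertical-valued tensors $\omega(X,Y)=D_{hX}JY$ and $\sigma(X,Y)=D_{JX}JY$ via (a) and (c) is valid, conditions (f) and (e) do yield the skew parts $\omega(X,Y)-\omega(Y,X)=J[hX,hY]$ and $\sigma(X,Y)-\sigma(Y,X)=[JX,JY]$, and the cyclic sums with $g|_{V(TM)}$ nondegenerate then pin down $\omega$ and $\sigma$; the resulting corrections to (\ref{berwaldconn.}) are precisely $\mathcal{C}'$ and $\mathcal{C}$ as defined in the paper.

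Two points need tightening. First, (b) is not a normalisation input: your Koszul identities already determine $\omega$ and $\sigma$ completely from (a), (c), (d), (e), (f), so $DC=v$ must be \emph{verified} for the constructed connection rather than imposed; it follows from $\mathcal{C}(X,S)=\mathcal{C}'(X,S)=0$ together with $\Gamma=[J,S]$ and $[C,h]=0$, consistently with your remark that it reduces to homogeneity of $E$. Second, the existence half is only a declaration of intent: you still owe the checks that the Koszul expressions are $\mathfrak{F}$-linear in $Z$, depend on the lower argument only through $hX$ (resp.\ $JX$), obey the Leibniz rule in $Y$, and reproduce (d) — the last being the step you correctly identify as requiring $d_hE=0$ (Theorem \ref{Barthel}) and the Fr\"olicher--Nijenhuis identities. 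As it stands the proposal is a correct and well-aimed plan whose uniqueness part is complete in outline, but whose existence part is not yet a proof.
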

 %\bigskip
 \par
The explicit expression  of\,  ${D}$ is given by:
\begin{equation}\label{cartanconn.}
  \left.
    \begin{array}{rcl}
  D_{JX}JY&=&\overcirc{D}_{JX}JY+\mathcal{C}(X,Y),\\
  D_{hX}JY&=&\overcirc{D}_{hX}JY+\mathcal{C}'(X,Y),\\
  {D}F&=&0,
\end{array}
  \right\}
\end{equation}
where $\mathcal{C}$ and $\mathcal{C}'$ are the scalar 2-forms on $TM$ defined  by
$$\Omega(\mathcal{C}(X,Y),Z)=\frac{1}{2}(\mathcal{L}_{JX}(J^\ast g))(Y,Z),\quad\quad
\Omega(\mathcal{C}'(X,Y),Z)=\frac{1}{2}(\mathcal{L}_{hX}g)(JY,JZ), $$
with  $(J^\ast g)(Y,Z)=g(JY,JZ)$.

 The tensors  ${\mathcal{C}}$ and $\mathcal{C}'$ will be called the first and second Cartan tensors respectively. They  are semi-basics,
 symmetric and
  \begin{equation}\label{c(s)}
  {\mathcal{C}}(X,S)=\mathcal{C}'(X,S)=0.
  \end{equation}

\bigskip

We have the following lemmas.
  \begin{lem}
The (h)h-torsion ${T}(hX,hY)$ and (h)v-torsion $T(hX,JY)$ of Cartan connection are  given respectively by
$${T}(hX,hY)=\mathfrak{R}(X,Y),\quad T(hX,JY)=(\mathcal{C}'-F\mathcal{C})(X,Y),$$
where $\mathfrak{R}$ is the curvature of Barthel connection.
\end{lem}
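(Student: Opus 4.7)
The plan is to express the torsion of Cartan connection $D$ in terms of the torsion of Berwald connection $\overcirc{D}$, exploiting the difference tensor
\[
\Delta(X,Y) := D_X Y - \overcirc{D}_X Y,
\]
together with the formulas in (\ref{cartanconn.}). Since torsion is linear over $\mathfrak{F}(TM)$ in each slot, its difference satisfies
\[
T(X,Y) - \overcirc{T}(X,Y) = \Delta(X,Y) - \Delta(Y,X).
\]
From (\ref{cartanconn.}) we read off $\Delta(JX,JY) = \mathcal{C}(X,Y)$ and $\Delta(hX,JY) = \mathcal{C}'(X,Y)$. To reach horizontal second arguments, I would exploit the fact that both $D$ and $\overcirc{D}$ annihilate $F$, so $\Delta$ commutes with $F$ in the second slot, giving $\Delta(hX,hY) = \Delta(hX,FJY) = F\,\mathcal{C}'(X,Y)$ and $\Delta(JX,hY) = F\,\mathcal{C}(X,Y)$.

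\textbf{(h)h-torsion.} Apply the identity above to obtain
\[
T(hX,hY) - \overcirc{T}(hX,hY) = F\mathcal{C}'(X,Y) - F\mathcal{C}'(Y,X) = 0,
\]
where the last equality follows from the symmetry of $\mathcal{C}'$ recorded just after (\ref{cartanconn.}). Combined with the preceding lemma on Berwald, this yields
\[
T(hX,hY) = \overcirc{T}(hX,hY) = \mathfrak{R}(X,Y).
\]

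\textbf{(h)v-torsion.} Property (e) of Berwald gives $\overcirc{T}(JY,hX) = 0$, hence $\overcirc{T}(hX,JY) = 0$ by antisymmetry of torsion. Therefore
\[
T(hX,JY) = \Delta(hX,JY) - \Delta(JY,hX) = \mathcal{C}'(X,Y) - F\mathcal{C}(Y,X) = (\mathcal{C}' - F\mathcal{C})(X,Y),
\]
using the symmetry of $\mathcal{C}$.

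The only genuinely non-routine step is the commutation $\Delta(X,FY) = F\,\Delta(X,Y)$, which is immediate from $DF = \overcirc{D}F = 0$, and the appeal to symmetry of the two Cartan tensors. Everything else is bookkeeping using the defining formulas of $D$ and $\overcirc{D}$ together with the structural identities $FJ = h$, $Fh = -J$.
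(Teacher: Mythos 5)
Your proof is correct. A small remark first: the paper itself states this lemma without proof (it is quoted as known from Grifone's work), but the equivalent computation does appear later, inside the proof of Lemma \ref{car.[]}, whose parts (b) and (c) are literally the two torsion identities rewritten as bracket formulas. Your route is genuinely different in organization from that computation. The paper substitutes the explicit expressions (\ref{berwaldconn.}) and (\ref{cartanconn.}) and then re-derives the horizontal bracket identity from scratch, using the vanishing of the torsion $t$ of $\Gamma$ and the identity $\mathfrak{R}(X,Y)=-v[hX,hY]$ to recover $\mathfrak{R}$ at the end. You instead isolate everything in the difference tensor $\Delta=D-\overcirc{D}$, observe that $T-\overcirc{T}$ is the antisymmetrization of $\Delta$, extend $\Delta$ to horizontal arguments via $DF=\overcirc{D}F=0$ and $FJ=h$, and then quote the Berwald torsion facts $\overcirc{T}(hX,hY)=\mathfrak{R}(X,Y)$ and $\overcirc{T}(JX,Y)=0$ as black boxes. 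This is more modular and makes transparent why the (h)h-torsions of the two connections coincide (the symmetry of $\mathcal{C}'$ kills the difference) and why the (h)v-torsion is exactly $\mathcal{C}'-F\mathcal{C}$; the price is that it leans on the earlier Berwald lemma, which the paper's own bracket computation does not need. One cosmetic point: the identity $T(X,Y)-\overcirc{T}(X,Y)=\Delta(X,Y)-\Delta(Y,X)$ follows directly from the definition of torsion (the Lie brackets cancel), not from tensoriality as you state; and when reading $\Delta(JX,JY)=\mathcal{C}(X,Y)$ off (\ref{cartanconn.}) it is worth noting that this is well defined precisely because $\mathcal{C}$ and $\mathcal{C}'$ are semi-basic. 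Neither point affects the validity of the argument.
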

\begin{lem}\label{car.r,p,q}
The h-curvature $R$, hv-curvature $P$  and v-curvature $Q $ of Cartan connection are given respectively by:
\begin{description}
  \item[(a)]$ R(X,Y)Z  = \overcirc{R}(X,Y)Z+(D_{hX}\mathcal{C}')(Y,Z)-(D_{hY}\mathcal{C}')(X,Z)
      +\mathcal{C}'(F\mathcal{C}'(X,Z),Y)\\
      {\hspace{1.9cm}}-\mathcal{C}'(F\mathcal{C}'(Y,Z),X)+\mathcal{C}(F\mathfrak{R}(X,Y),Z). $
  \item[(b)] $ P(X,Y)Z  = \overcirc{P}(X,Y)Z+(D_{hX}{\mathcal{C}})(Y,Z)-(D_{JY}\mathcal{C}')(X,Z)
      +\mathcal{C}(F\mathcal{C}'(X,Z),Y) \\
      {\hspace{1.9cm}} +{\mathcal{C}}(F\mathcal{C}'(X,Y),Z)-\mathcal{C}'(F{\mathcal{C}}(Y,Z),X)-\mathcal{C}'(F{\mathcal{C}}(X,Y),Z). $
  \item[(c)] $Q(X,Y)Z={\mathcal{C}}(F{\mathcal{C}}(X,Z),Y)-{\mathcal{C}}(F{\mathcal{C}}(Y,Z),X),$
\end{description}
where\, $\overcirc{R}$ and\, $\overcirc{P}$ are respectively the h-curvature and hv-curvature
  of Berwald connection.
\end{lem}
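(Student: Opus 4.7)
The plan is to compute each of $R$, $P$, $Q$ directly from the definition of the curvature of a linear connection,
$$K(X,Y)Z = D_X D_Y Z - D_Y D_X Z - D_{[X,Y]}Z,$$
specializing the first two arguments to the appropriate combinations of $hX$, $hY$, $JX$, $JY$, and then systematically replacing every occurrence of $D$ by $\overcirc{D}$ plus a correction term drawn from (\ref{cartanconn.}).

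As a preparatory step I would record the ``dual'' relations for horizontal arguments. Since both $D$ and $\overcirc{D}$ annihilate $F$ (and $F$ intertwines the vertical and horizontal bundles via $h=FJ$), applying $F$ to (\ref{cartanconn.}) yields $D_{hX}hY=\overcirc{D}_{hX}hY+F\mathcal{C}'(X,Y)$ and $D_{JX}hY=\overcirc{D}_{JX}hY+F\mathcal{C}(X,Y)$. Because $Dh=Dv=0$, the curvature operator $K(X,Y)$ commutes with $h,v,J,F$, so it suffices to evaluate each piece on a vertical $JZ$; the stated formulas in terms of a generic $Z$ then follow.

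For part (a), setting $R(X,Y)Z:=K(hX,hY)JZ$ and substituting the correction at each of the three occurrences of $D$ produces an $\overcirc{D}$-only part, which assembles into $\overcirc{R}(X,Y)Z$, and a collection of correction terms. Grouping the first-order terms into tensorial covariant derivatives gives the expressions $(D_{hX}\mathcal{C}')(Y,Z)-(D_{hY}\mathcal{C}')(X,Z)$; the second-order terms, arising when the outer $D$ hits the output of an inner $\mathcal{C}'$, produce the quadratic pieces $\mathcal{C}'(F\mathcal{C}'(X,Z),Y)-\mathcal{C}'(F\mathcal{C}'(Y,Z),X)$ via the dual relation above. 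The bracket $D_{[hX,hY]}JZ$ is then split using $[hX,hY]=h[hX,hY]+v[hX,hY]$; the horizontal piece combines with the surviving $\overcirc{D}$-bracket term to complete $\overcirc{R}$, while the vertical piece, identified up to sign with $\mathfrak{R}(X,Y)$ via the defining relation $\mathfrak{R}=-\tfrac12[h,h]$, contributes exactly $\mathcal{C}(F\mathfrak{R}(X,Y),Z)$ upon applying $D_{J(\cdot)}J(\cdot)$ and invoking (\ref{cartanconn.}). Parts (b) and (c) proceed identically: in (b) one computes $K(hX,JY)JZ$, where the two covariant derivatives bring in $\mathcal{C}'$ and $\mathcal{C}$ respectively, and the bracket $[hX,JY]$ supplies the symmetric cross-terms. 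In (c), all arguments are vertical, $\overcirc{D}_{JX}JY=J[JX,Y]$ forces $\overcirc{Q}=0$, the bracket term cancels with the $\overcirc{D}$-pieces by the Jacobi-type identity for $[J,J]=0$, and only the purely $\mathcal{C}$-quadratic antisymmetric remainder survives.

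The main obstacle is the bookkeeping in (a) and (b): one must correctly split the Lie brackets $[hX,hY]$ and $[hX,JY]$ into horizontal and vertical components, recognize the vertical component of $[hX,hY]$ as $\mathfrak{R}(X,Y)$, and track which correction terms assemble into tensorial derivatives $(D_{h\cdot}\mathcal{C}')$ versus which combine into the quadratic $\mathcal{C}$--$\mathcal{C}'$ compositions. The semi-basic nature of $\mathcal{C}$ and $\mathcal{C}'$ together with identity (\ref{c(s)}) is used throughout to discard spurious terms (in particular those involving $C$ or $S$) and to legitimize replacing $\mathcal{C}(hW,\cdot)$ by $\mathcal{C}(W,\cdot)$ and similarly for $\mathcal{C}'$.
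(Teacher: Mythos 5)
The paper itself offers no proof of this lemma: it is stated as imported background material in the section ``Berwald and Cartan connections'', with the reader referred to \cite{r22} and \cite{Nabil.1}. So there is no in-paper argument to compare against; your proposal has to be judged on its own. Your strategy --- expand $K(X,Y)Z=D_XD_YZ-D_YD_XZ-D_{[X,Y]}Z$ on the appropriate lifts $K(hX,hY)JZ$, $K(hX,JY)JZ$, $K(JX,JY)JZ$, and replace $D$ by $\overcirc{D}$ plus the difference tensors of (\ref{cartanconn.}) --- is indeed the standard derivation and is the one carried out in the cited sources. The structural ingredients are all correctly identified: the dual relations $D_{hX}hY=\overcirc{D}_{hX}hY+F\mathcal{C}'(X,Y)$ and $D_{JX}hY=\overcirc{D}_{JX}hY+F\mathcal{C}(X,Y)$ obtained from $DF=\overcirc{D}F=0$, the splitting $[hX,hY]=h[hX,hY]-\mathfrak{R}(X,Y)$ which produces the term $\mathcal{C}(F\mathfrak{R}(X,Y),Z)$ in (a), and the role of the (h)v-torsion term $(\mathcal{C}'-F\mathcal{C})(X,Y)$ inside $[hX,JY]$ in supplying the terms $\mathcal{C}(F\mathcal{C}'(X,Y),Z)-\mathcal{C}'(F\mathcal{C}(X,Y),Z)$ in (b).

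That said, as written this is a plan rather than a proof: the entire content of the lemma is the bookkeeping you defer. Two places need genuine verification, not assertion. First, you must check that the first-order correction terms (those where the outer $\overcirc{D}$ or $D$ hits a $\mathcal{C}$ or $\mathcal{C}'$, and those coming from the horizontal part of the bracket) assemble \emph{exactly} into the tensorial derivatives $(D_{hX}\mathcal{C}')(Y,Z)$, $(D_{hX}\mathcal{C})(Y,Z)$, $(D_{JY}\mathcal{C}')(X,Z)$ with no leftover terms; this uses semi-basicity and Lemma \ref{car.[]} in an essential way and is where errors typically hide. Second, in part (c) your claim that the bracket and $\overcirc{D}$-pieces ``cancel'' and ``only the purely $\mathcal{C}$-quadratic antisymmetric remainder survives'' is too quick: the second-order terms produced directly by $D_{JX}\bigl(\mathcal{C}(Y,Z)\bigr)-D_{JY}\bigl(\mathcal{C}(X,Z)\bigr)$ are $\mathcal{C}(F\mathcal{C}(Y,Z),X)-\mathcal{C}(F\mathcal{C}(X,Z),Y)$, i.e.\ the \emph{negative} of the stated formula, so the remaining first-order and bracket terms $-\mathcal{C}(F[JX,JY],Z)$, $\overcirc{D}_{JX}\mathcal{C}(Y,Z)$, $\mathcal{C}(X,F\overcirc{D}_{JY}JZ)$, etc., must contribute nontrivially rather than vanish. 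The result is correct, but your sketch, taken at face value at that step, would yield the wrong sign; the cancellation pattern has to be exhibited explicitly.
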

\begin{lem}\label{car.curv.}
For Cartan connection,  the following properties hold:
\begin{description}
  \item[(a)]$R(X,Y)S=\mathfrak{R}(X,Y)$.
  \item[(b)] $P(X,Y)S=\mathcal{C}'(X,Y)$.
  \item[(c)]$P(S,X)Y=P(X,S)Y=0$.
  \item[(d)]$Q(S,X)Y=Q(X,S)Y=Q(X,Y)S=0.$
\end{description}
\end{lem}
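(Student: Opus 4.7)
My plan is to treat each of the four statements as a direct substitution into the explicit formulas of Lemma~\ref{car.r,p,q}, supported by three standing ingredients: the vanishing identity~\eqref{c(s)} $\mathcal{C}(X,S)=\mathcal{C}'(X,S)=0$, the semi-basic character of the Cartan tensors, and the corresponding Berwald identities for $\overcirc{R}$ and $\overcirc{P}$ from~\cite{Nabil.1}. Part~(d) will be immediate: since the v-curvature is bilinear in $\mathcal{C}$, any specialization of an argument to $S$ forces an inner $\mathcal{C}(\cdot,S)=0$ factor, and every term vanishes by~\eqref{c(s)}.

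For (a)--(c), the key preliminary I would record is how the Cartan covariant derivative acts on the spray. Applying $J$ to $JS=C$ and using $DJ=0$ together with $DC=v$ yields
\[J(D_{hX}S)=v(hX)=0,\qquad J(D_{JY}S)=v(JY)=JY,\]
so that $D_{hX}S\in V(TM)$ and $D_{JY}S-Y\in V(TM)$. Since the symmetric semi-basic tensors $\mathcal{C}$ and $\mathcal{C}'$ vanish on vertical arguments, every covariant derivative of the form $(D_{hX}\mathcal{C}')(Y,S)$ or $(D_{JY}\mathcal{C}')(X,S)$ will simplify to either zero or to $\pm\mathcal{C}'(X,Y)$.

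With this tool, substituting $Z=S$ in formula~(a) of Lemma~\ref{car.r,p,q} annihilates every term except $\overcirc{R}(X,Y)S$, and the Berwald identity $\overcirc{R}(X,Y)S=\mathfrak{R}(X,Y)$ from~\cite{Nabil.1} delivers~(a). Substituting $Z=S$ in formula~(b) collects to $P(X,Y)S=\overcirc{P}(X,Y)S+\mathcal{C}'(X,Y)$, and the Berwald counterpart $\overcirc{P}(X,Y)S=0$ gives~(b). For~(c), substituting $X=S$ or $Y=S$ in formula~(b) reduces $P(S,X)Y$ and $P(X,S)Y$ to residual expressions built from $\overcirc{P}(S,X)Y$, $\overcirc{P}(X,S)Y$, $(D_S\mathcal{C})(X,Y)$ and $(D_C\mathcal{C}')(X,Y)$.

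The hard part will be closing~(c): the residual covariant derivatives do not cancel by a purely formal manipulation. I would invoke a Landsberg-type identity relating $D_S\mathcal{C}$ to $\mathcal{C}'$, derived from $Dg=0$ together with the defining formula $\Omega(\mathcal{C}'(X,Y),Z)=\tfrac{1}{2}(\mathcal{L}_{hX}g)(JY,JZ)$, and a homogeneity argument for $\mathcal{C}'$ that controls $D_C\mathcal{C}'$ via $DC=v$. Once these identities are combined with the Berwald analogs of~(c) from~\cite{Nabil.1}, all residual terms cancel and~(c) follows.
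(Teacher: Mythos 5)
The paper states this lemma without proof; it is quoted as background from \cite{r22} and \cite{Nabil.1}, so there is no in-paper argument to compare against and your proposal has to stand on its own. Parts \textbf{(a)}, \textbf{(b)} and \textbf{(d)} do: substituting $S$ into Lemma \ref{car.r,p,q}, using (\ref{c(s)}), the semi-basicity of $\mathcal{C},\mathcal{C}'$, and your correct observation that $DJ=0$ and $DC=v$ force $D_{hX}S$ to be vertical and $D_{JY}S-Y$ to be vertical, reduces everything to the quotable Berwald identities $\overcirc{R}(X,Y)S=\mathfrak{R}(X,Y)$ and $\overcirc{P}(X,Y)S=0$.

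Part \textbf{(c)} is a genuine gap. Your own reduction leaves $P(S,Y)Z=\overcirc{P}(S,Y)Z+(D_{S}\mathcal{C})(Y,Z)+\mathcal{C}'(Y,Z)$ and $P(X,S)Z=\overcirc{P}(X,S)Z-(D_{C}\mathcal{C}')(X,Z)$, so beyond the vanishing of $\overcirc{P}$ when $S$ sits in the first or second slot you still need two substantive identities, namely $(D_{S}\mathcal{C})(Y,Z)=-\mathcal{C}'(Y,Z)$ and $(D_{C}\mathcal{C}')(X,Z)=0$. You only gesture at these (``a Landsberg-type identity'', ``a homogeneity argument'') without stating or deriving them; the first is essentially the Landsberg identity itself, is not a formal consequence of $Dg=0$ plus the defining formula for $\mathcal{C}'$, and is roughly as deep as the claim $P(X,S)Y=P(S,X)Y=0$ that you are trying to prove. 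As written, (c) is a plan rather than a proof. A route that actually closes it (and gives (a), (b), (d) in one line each) is to return to the definition of the curvatures as $K(hX,hY)JZ$, $K(hX,JY)JZ$, $K(JX,JY)JZ$ for the classical curvature $K$ of $D$ and use $DC=v$, $Dh=Dv=0$, $[C,h]=0$ and $\mathfrak{R}(X,Y)=-v[hX,hY]$ directly: for instance $R(X,Y)S=K(hX,hY)C=-D_{[hX,hY]}C=-v[hX,hY]=\mathfrak{R}(X,Y)$, while $P(X,S)Y=K(hX,C)JY$ is controlled by the homogeneity identities $D_{C}T=0$, $D_{C}R=0$ of the Cartan connection that the paper itself invokes in the proof of Proposition \ref{cartan.nul}.
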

\begin{lem}\label{bianchi}
The Bainchi identities for Cartan connection are given by:
\begin{description}
  \item[(a)]$\mathfrak{S}_{X,Y,Z}\{R(X,Y)Z\}=\mathfrak{S}_{X,Y,Z}\{\mathcal{C}(F\mathfrak{R}(X,Y),Z)\}$.

  \item[(b)] $\mathfrak{S}_{X,Y,Z}\{Q(X,Y)Z\}=0$.

  \item[(c)] $\mathcal{C}(F\mathfrak{R}(X,Y),Z)=\mathfrak{R}(F\mathcal{C}(X,Z),Y)-\mathfrak{R}(F\mathcal{C}(Y,Z),X)$.

  \item[(d)] $\mathfrak{S}_{X,Y,Z}\{(D_{hX}\mathfrak{R})(Y,Z)\}=\mathfrak{S}_{X,Y,Z}\{
  \mathcal{C}'(F\mathfrak{R}(X,Y),Z)\}$.

  \item[(e)]$\mathfrak{S}_{X,Y,Z}\,\{(D_{hX}R)(Y,Z)\}=\mathfrak{S}_{X,Y,Z}\,\{P(X,F\mathfrak{R}(Y,Z))\}$.

  \item[(f)]$(D_{hX}P)(Y,Z)-(D_{hY}P)(X,Z)+(D_{JZ}R)(X,Y)=P(X,F\mathcal{C}'(Y,Z))\\
  -P(Y,F\mathcal{C}'(X,Z))+R(F{\mathcal{C}}(Y,Z),X)-R(F{\mathcal{C}}(X,Z),Y)-Q(F\mathfrak{R}(X,Y),Z)$.

  \item[(g)]$(D_{hX}Q)(Y,Z)-(D_{JY}P)(X,Z)+(D_{JZ}P)(X,Y)=P(F{\mathcal{C}}(X,Y),Z)\\
  -P(F{\mathcal{C}}(Z,X),Y)-Q(F\mathcal{C}'(X,Y),Z)
  +Q(F\mathcal{C}'(Z,X),Y)$.

  \item[(h)]$\mathfrak{S}_{X,Y,Z}\{(D_{JX}Q)(Y,Z)\}=0$,
\end{description}
where $\mathfrak{S}_{X,Y,Z}$ is the cyclic sum over the vector fields $X$, $Y$ and $Z$.
\end{lem}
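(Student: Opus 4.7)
The plan is to derive all eight identities from the two general Bianchi identities for a linear connection $D$ on $TM$,
\begin{align*}
\mathfrak{S}_{X,Y,Z}\bigl\{K(X,Y)Z - T(T(X,Y),Z) - (D_XT)(Y,Z)\bigr\} &= 0,\\
\mathfrak{S}_{X,Y,Z}\bigl\{(D_XK)(Y,Z) + K(T(X,Y),Z)\bigr\} &= 0,
\end{align*}
where $K$ and $T$ are the curvature and torsion of $D$, specialized to the Cartan connection by choosing $X,Y,Z$ to be horizontal or vertical and inserting the torsion decomposition $T(JX,JY)=0$, $T(hX,JY)=\mathcal{C}'(X,Y)-F\mathcal{C}(X,Y)$, $T(hX,hY)=\mathfrak{R}(X,Y)=J(F\mathfrak{R}(X,Y))$ from Lemma 3.4, together with the curvature decomposition into $R$, $P$, $Q$ furnished by Lemma \ref{car.r,p,q}.

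First I would dispatch the purely algebraic identities. Identity (b) follows by substituting the explicit formula $Q(X,Y)Z=\mathcal{C}(F\mathcal{C}(X,Z),Y)-\mathcal{C}(F\mathcal{C}(Y,Z),X)$ into the cyclic sum and observing that every term cancels against another by the symmetry of $\mathcal{C}$; identity (h) is the same computation after replacing $\mathcal{C}$ by $D_{JX}\mathcal{C}$. Identity (d), which is a Bianchi for the Barthel connection, is obtained from the graded Jacobi identity $[h,[h,h]]=0$ for the Frölicher-Nijenhuis bracket together with $Dh=0$ and the (h)v-torsion formula. Identity (c) comes out by taking the first general Bianchi on the triple $(hX,hY,JZ)$ and reading off the horizontal component, where the vanishings $P(S,\cdot)\cdot=0$ etc.\ from Lemma \ref{car.curv.} clear the way.

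Next, for the main first-Bianchi identity (a), I would substitute $(hX,hY,hZ)$ into the first general identity. The curvature piece produces $R(X,Y)Z$; the torsion-of-torsion term becomes $T(J F\mathfrak{R}(X,Y),hZ)$, which by the (h)v-torsion formula splits into a $\mathcal{C}'$-part and an $F\mathcal{C}$-part; and the derivative piece, using $Dh=0$, reduces to $\mathfrak{S}\{(D_{hX}\mathfrak{R})(Y,Z)\}$, which by the just-established (d) equals $\mathfrak{S}\{\mathcal{C}'(F\mathfrak{R}(X,Y),Z)\}$ and so cancels the $\mathcal{C}'$-part of the torsion-of-torsion contribution — leaving exactly the $\mathcal{C}$-term on the right. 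For the second-Bianchi identities (e), (f), (g), I would specialize the second general identity to $(hX,hY,hZ)$, $(hX,hY,JZ)$, and $(hX,JY,JZ)$ respectively, and expand each $K(T(\cdot,\cdot),\cdot)$ using the torsion formulas to produce the $P$, $R$, $Q$ evaluated on $F\mathfrak{R}$, $F\mathcal{C}'$, $F\mathcal{C}$ arguments appearing on the right.

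The main obstacle is (f), which couples all three curvatures $R,P,Q$ with both Cartan tensors $\mathcal{C},\mathcal{C}'$ and offers no overall symmetry simplifying the cyclic sum. Careful bookkeeping — using the antisymmetry of $R$ and $Q$, the absence of antisymmetry of $P$ in its first two slots, the vanishing relations of Lemma \ref{car.curv.}, and the already-proved (a) and (d) — will be required to obtain the stated right-hand side exactly.
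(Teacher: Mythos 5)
The paper offers no proof of this lemma: it is stated as background and implicitly referred to the sources \cite{r22} and \cite{special FS}, where exactly your strategy --- specializing the two general Bianchi identities of a linear connection with torsion to horizontal/vertical arguments and inserting the torsion decomposition --- is carried out. Your plan is sound and, as far as I can check, complete: for instance, (f) and (g) drop out mechanically from the second general identity on $(hX,hY,JZ)$ and $(hX,JY,JZ)$ once one uses $T(hX,JY)=J F\mathcal{C}'(X,Y)-hF\mathcal{C}(X,Y)$ and the symmetry of $\mathcal{C}$, $\mathcal{C}'$; they are less of an obstacle than you fear. Two points to tighten. First, since $R(X,Y)Z=K(hX,hY)JZ$ while the cyclic sum in the first identity on $(hX,hY,hZ)$ naturally produces $K(hX,hY)hZ=F\bigl(R(X,Y)Z\bigr)$, you should say explicitly that $DF=0$ (equivalently $DJ=Dh=0$) lets you conjugate by $F$ throughout; the same remark applies to every curvature term you read off. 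Second, your treatment of (h) is too quick: $\mathfrak{S}\{(D_{JX}Q)(Y,Z)\}$ is \emph{not} the computation of (b) with $\mathcal{C}$ replaced by $D_{JX}\mathcal{C}$, because the derivative index $X$ also cycles and the Leibniz expansion puts $D_{JX}\mathcal{C}$ in both the inner and outer slots; the pairwise cancellation then requires the total symmetry of $g((D_{JX}\mathcal{C})(Y,Z),JW)$, which you have not established. The fix is free within your own framework: apply the second general Bianchi identity to $(JX,JY,JZ)$, where $T(JX,JY)=0$ kills the curvature--torsion terms and (h) follows immediately.
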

%%%%%%%%%%%%%%%%%%%%%%%%%%%%%%%%%%%%%%%%%%%%%%%%%%%%%%%%%%%%%%%%%%%%%%%%%%%%%%%%%%%%%%%%%%%%%%%%%%%%%%%%%%%%&&&&&&&&&&&&&&&&&&&&&&&&&&&&&&&&&&&&&&&&&&
\Section{Nullity distribution of Cartan h-curvature}

We are now in a position to study the nullity distributions associated to Cartan connection. Firstly, we  study the nullity
 distribution of the h-curvature tensor. It should be noted that the nullity distributions of  Barthel and Berwald connections have been investigated in \cite{Nabil.2} and \cite{Nabil.1}.\\

We need the following  lemma for subsequent use.

\begin{lem}\label{car.[]}For all $X,Y\in \cppp$, we have
\begin{description}
  \item[(a)] $[JX,JY]=J(D_{JX}Y-D_{JY}X).$
  \item[(b)] $[hX,JY]=J(D_{hX}Y)-h(D_{JY}X)-(\mathcal{C}'-F\mathcal{C})(X,Y).$
  \item[(c)] $[hX,hY]=h(D_{hX}Y-D_{hY}X)-\mathfrak{R}(X,Y).$
\end{description}
\end{lem}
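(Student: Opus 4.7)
The plan is to apply the classical torsion identity $[X,Y] = D_X Y - D_Y X - T(X,Y)$, valid for any linear connection, and combine it with the defining properties of Cartan connection together with the torsion formulas collected in the preceding lemma.

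First I would record the two ingredients that make each bracket collapse to the stated form. On the one hand, from Cartan's axioms (a) and (c) we have $DJ=0$ and $Dh=0$ (and hence $Dv=0$), so that $D_X$ commutes with $J$ and with $h$ on any argument, that is, $D_X(JY)=J(D_XY)$ and $D_X(hY)=h(D_XY)$ for all $X,Y\in\cppp$. On the other hand, the torsion values we need are: $T(JX,JY)=0$ (axiom (e) of Cartan), $T(hX,hY)=\mathfrak{R}(X,Y)$, and $T(hX,JY)=(\mathcal{C}'-F\mathcal{C})(X,Y)$, the last two being exactly the content of the lemma recalled just before Lemma \ref{car.r,p,q}.

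With these in hand, each of (a), (b), (c) is a one-line substitution. For (a), writing $[JX,JY]=D_{JX}(JY)-D_{JY}(JX)-T(JX,JY)$ and pulling $J$ outside using $DJ=0$, the torsion term vanishes and the identity follows. For (b), the analogous substitution with $hX$ and $JY$ yields $D_{hX}(JY)=J(D_{hX}Y)$ and $D_{JY}(hX)=h(D_{JY}X)$, and the torsion contributes $-(\mathcal{C}'-F\mathcal{C})(X,Y)$. For (c), both covariant derivatives become horizontal by $Dh=0$, and the torsion contributes $-\mathfrak{R}(X,Y)$.

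There is really no obstacle here beyond bookkeeping: the statement is a direct translation of the torsion identity once one knows $DJ=0$, $Dh=0$ and the explicit form of the Cartan torsion on each of the three pairs of arguments. If anything needed emphasis, it would be the (purely notational) point that the quantities $D_{JX}Y$, $D_{hX}Y$ etc.\ live in $\cppp$, so that applying $J$ or $h$ to them is legitimate and that, thanks to $DJ=0$ and $Dh=0$, no additional terms are produced when these projectors are moved across the covariant derivative.
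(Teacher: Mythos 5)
Your proof is correct, but it takes a genuinely different route from the paper's. You invoke the classical torsion identity $[X,Y]=D_XY-D_YX-T(X,Y)$ on the three pairs $(JX,JY)$, $(hX,JY)$, $(hX,hY)$, move $J$ and $h$ across $D$ using $DJ=Dh=0$, and read off the torsion values $T(JX,JY)=0$, $T(hX,JY)=(\mathcal{C}'-F\mathcal{C})(X,Y)$, $T(hX,hY)=\mathfrak{R}(X,Y)$ from the axioms and the lemma stated just before Lemma \ref{car.r,p,q}; the signs all check out against the stated identities. The paper instead works from the explicit expressions (\ref{berwaldconn.}) and (\ref{cartanconn.}) of the Berwald and Cartan connections, using the symmetry of $\mathcal{C}$ and $\mathcal{C}'$, the property $\overcirc{D}_{JX}JY=J[JX,Y]$, the vanishing of the torsion $t$ of $\Gamma$, and the identity $\mathfrak{R}(X,Y)=-v[hX,hY]$; in effect it re-derives the torsion values rather than quoting them. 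Your argument is shorter and more structural, at the price of leaning on the unproved torsion lemma; within the logical organization of the paper that lemma is stated as known, so this is legitimate, but one should be aware that in the sources those torsion formulas are themselves typically obtained by computations of the kind the paper carries out here, so your proof trades self-containedness for brevity.
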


\begin{proof}~\par
\noindent \textbf{(a)} Using (\ref{berwaldconn.}) and (\ref{cartanconn.}), by the symmetry of $\mathcal{C}$ and since $[J,J]=0$, $J^2=0$ and $DJ=0$, we get
\begin{eqnarray*}
 % \nonumber to remove numbering (before each equation)
   J(D_{JX}Y-D_{JY}X)&=&D_{JX}JY-D_{JY}JX  \\
   &=&\overcirc{D}_{JX}JY+\mathcal{C}(X,Y)-\overcirc{D}_{JY}JX-\mathcal{C}(Y,X) \\
   &=&J[JX,Y]-J[JY,X]\\
   &=&[JX,JY].
 \end{eqnarray*}
\noindent \textbf{(b)} Using (\ref{berwaldconn.}) and (\ref{cartanconn.}), by  the symmetry  of $\mathcal{C}$ and since $DJ=Dh=DF=0$, we obtain
\begin{eqnarray*}
 % \nonumber to remove numbering (before each equation)
   J(D_{hX}Y)-h(D_{JY}X)&=&D_{hX}JY-D_{JY}hX  \\
   &=&\overcirc{D}_{hX}JY+\mathcal{C}'(X,Y)-\overcirc{D}_{JY}hX-F\mathcal{C}(Y,X) \\
   &=&v[hX,JY]-h[JY,X]+(\mathcal{C}'-F\mathcal{C})(X,Y)\\
     &=&[hX,JY]+(\mathcal{C}'-F\mathcal{C})(X,Y).
 \end{eqnarray*}
 \noindent \textbf{(c)} Again using (\ref{berwaldconn.}) and (\ref{cartanconn.}), by   the symmetry property of $\mathcal{C}'$, we have
\begin{eqnarray*}
 % \nonumber to remove numbering (before each equation)
   h(D_{hX}Y-D_{hY}X)&=&D_{hX}hY-D_{hY}hX  \\
   &=&\overcirc{D}_{hX}hY+F\mathcal{C}'(X,Y)-\overcirc{D}_{hY}hX-F\mathcal{C}'(Y,X) \\
      &=&Fv[hX,JY]+Fv[JX,hY].
    \end{eqnarray*}
 As the torsion of $\Gamma$  vanishes, then $0=t(X,Y)=v[JX,hY]+v[hX,JY]-J[hX,hY]$, from which  $Fv[JX,hY]+Fv[hX,JY]=FJ[hX,hY]=h[hX,hY]$. Consequently,
 $$ h(D_{hX}Y-D_{hY}X) =h[hX,hY]=[hX,hY]-v[hX,hY]=
   [hX,hY]+\mathfrak{R}(X,Y),$$
  where we have used the identity $\mathfrak{R}(X,Y)=-v[hX,hY]$ \cite{Nabil.2}.
\end{proof}
\begin{rem}\label{barthelr}\em{It is to be noted that the identity $ \mathfrak{R}(X,Y)=-v[hX,hY]$ shows that the Lie bracket of two horizontal  vector fields is horizontal if and only if the curvature $\mathfrak{R}$ vanishes. This means that a necessary and sufficient condition for the horizontal distribution to be completely integrable is that  $\mathfrak{R}$ vanishes. This fact can also be deduced from Lemma \ref{car.[]} \textbf{(c)} above.}
\end{rem}

\begin{defn}\label{nr} Let $R$ be the h-curvature tensor of Cartan connection.
The nullity space of $R$ at a point $z\in TM$ is the subspace of $H_z(TM)$ defined by
$$\mathcal{N}_R(z):=\{X\in H_z(TM) : \,  R(X,Y)=0, \, \,\forall\, Y\in T_z(TM)\}.$$
The dimension of $\mathcal{N}_R(z)$, denoted by $\mu_R(z)$, is the index of nullity of $R$ at $z$.

 If the index of nullity is constant,
then the map $\mathcal{N}_R:z\mapsto \mathcal{N}_R(z) $ defines a distribution $\mathcal{N}_R$ of
dimension $\mu_R$ called nullity distribution of $R$.

Any  vector field belonging to the nullity distribution is called  a nullity vector field.
 \end{defn}

\begin{prop}\label{cartan.nul} The nullity distribution $\N_R$ has the following properties:
~\par
\begin{description}
   \item[(a)]$\mathcal{N}_R\neq \phi$.

  \item[(b)]$\mathcal{N}_R\subseteq \mathcal{N}_\mathfrak{R}$, where $\mathcal{N}_\mathfrak{R}$
  is the nullity distribution of the curvature $\mathfrak{R}$.

  \item[(c)]If $Z\in \mathcal{N}_{R}$, then $R(X,Y)Z=\mathcal{C}(F\mathfrak{R}(X,Y),Z)$.

  \item[(d)]If $S\in \mathcal{N}_{R}$, then $\mathfrak{R}=0$.

  \item[(e)] If $X\in \mathcal{N}_{R}$, then $[C,X]\in \N_R$ and consequently, $ [C,X]\in\mathcal{N}_{\mathfrak{R}}$.
\end{description}

\end{prop}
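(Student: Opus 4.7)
Parts (a) and (b) are immediate. For (a), $0 \in \N_R(z)$ trivially at every $z$, so $\N_R$ is never empty. For (b), suppose $X \in \N_R$; then $R(X,Y) = 0$ for every $Y$, and evaluating on $S$ and invoking Lemma~\ref{car.curv.}(a) gives $\mathfrak{R}(X,Y) = R(X,Y)S = 0$ for all $Y$, whence $X \in \N_\mathfrak{R}$.

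For (c) I would appeal to the first Bianchi identity, Lemma~\ref{bianchi}(a). When $Z \in \N_R$, the skew-symmetry of $R$ in its first two arguments kills the two cyclic terms $R(Y,Z)X$ and $R(Z,X)Y$ on the left-hand side, reducing it to $R(X,Y)Z$. Part (b) then yields $Z \in \N_\mathfrak{R}$, so by the analogous skew-symmetry of $\mathfrak{R}$ the terms $\mathcal{C}(F\mathfrak{R}(Y,Z),X)$ and $\mathcal{C}(F\mathfrak{R}(Z,X),Y)$ on the right also vanish, leaving only $\mathcal{C}(F\mathfrak{R}(X,Y),Z)$. Part (d) is then a two-line corollary of (c): specializing to $Z = S$, the left-hand side equals $R(X,Y)S = \mathfrak{R}(X,Y)$ by Lemma~\ref{car.curv.}(a), whereas the right-hand side $\mathcal{C}(F\mathfrak{R}(X,Y),S)$ vanishes by~(\ref{c(s)}), forcing $\mathfrak{R} = 0$.

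Part (e) is where the real work lies, and I expect it to be the main obstacle. My plan is to exploit the homogeneity of $R$. Since the Barthel connection is homogeneous, $[C,h] = 0$, so $[C,\cdot]$ preserves horizontality: for $X = hX$ we have $h[C,X] = [C,hX] - [C,h]X = [C,X]$. Next, from the explicit decomposition of $R$ in Lemma~\ref{car.r,p,q}(a) together with the standard homogeneity degrees of $\overcirc{R}$, $\mathcal{C}$, $\mathcal{C}'$ and $\mathfrak{R}$ in the Klein--Grifone framework, one verifies that $\mathcal{L}_C R = cR$ for some constant $c$. For $X \in \N_R$ and arbitrary $Y, Z \in \cppp$, all three terms of the Lie-derivative expansion that involve $R(X,\cdot)\cdot$ vanish, so
$$(\mathcal{L}_C R)(X,Y)Z = -R([C,X],Y)Z;$$
combined with $(\mathcal{L}_C R)(X,Y)Z = cR(X,Y)Z = 0$, this forces $R([C,X],Y)Z = 0$ for all $Y,Z$. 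With the horizontality of $[C,X]$ just established, we conclude $[C,X] \in \N_R$, and consequently $[C,X] \in \N_\mathfrak{R}$ by (b). The technical hurdle is the clean verification of $\mathcal{L}_C R = cR$ in this framework, which I would carry out first.
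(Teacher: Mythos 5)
Your proofs of \textbf{(a)}--\textbf{(d)} coincide with the paper's: \textbf{(b)} by evaluating on $S$ and using Lemma \ref{car.curv.}\textbf{(a)}, and \textbf{(c)}, \textbf{(d)} by exactly the same use of the first Bianchi identity, the skew-symmetry of $R$ and $\mathfrak{R}$, and (\ref{c(s)}). For \textbf{(e)} you take a mildly different but equivalent route. The paper starts from the identity $D_CR=0$ (quoted from Grifone), expands it using $X\in\N_R$ to obtain $R(D_CX,Y)=0$, and then converts $D_CX$ into $[C,X]$ via the connection formulas; you instead posit $\mathcal{L}_CR=cR$ and expand the Lie derivative. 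These are the same statement in different clothing: since $DC=v$ and $T(C,\cdot)=0$ (by (\ref{c(s)})), one has $D_CW=vW+[C,W]$, and combined with the semi-basicity of $R$ this gives $(D_CR)(X,Y)Z=[C,R(X,Y)Z]-R([C,X],Y)Z-R(X,[C,Y])Z-R(X,Y)[C,Z]$, so $D_CR=0$ is precisely your homogeneity relation with $c=0$ (i.e.\ $R$ is h(1)). Thus the ``technical hurdle'' you flag --- verifying $\mathcal{L}_CR=cR$ --- is exactly the one external input the paper imports wholesale from the literature rather than re-deriving from the homogeneity degrees of $\overcirc{R}$, $\mathcal{C}$, $\mathcal{C}'$ and $\mathfrak{R}$; your plan of deducing it from those degrees is workable but is where all the remaining work sits. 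Both arguments then use $[C,h]=0$ identically to keep $[C,X]$ horizontal, and \textbf{(b)} finishes the claim. The proposal is sound.
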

\begin{proof}~\par

\noindent \textbf{(b)} Let $X$ be  a  nullity  vector field.  We have %{\vspace{-.5cm}}
\begin{eqnarray*}
             % \nonumber to remove numbering (before each equation)
               X\in \mathcal{N}_{R} &\Longrightarrow& R(X,Y)Z=0\quad \forall \, Y,Z \in \cppp \\
                &\Longrightarrow&  R(X,Y)S=0\quad \,\forall \, Y \in \cppp\\
                &\Longrightarrow& \mathfrak{R}(X,Y)=0\quad\,\,\,\,\, \forall \, Y \in \cppp\\
                 &\Longrightarrow&  X\in \mathcal{N}_{\mathfrak{R}}.
             \end{eqnarray*}
\noindent \textbf{(c)} Let $Z\in \mathcal{N}_{R} $,
then $Z\in \mathcal{N}_{\mathfrak{R}}$ and by Lemma \ref{bianchi} \textbf{(a)}, we have
 $$\mathfrak{S}_{X,Y,Z}\{R(X,Y)Z\}=\mathfrak{S}_{X,Y,Z}\{\mathcal{C}(F\mathfrak{R}(X,Y),Z)\}.$$
Since $R(Y,Z)X=R(Z,X)Y=0$ and $\mathfrak{R}(Y,Z)=\mathfrak{R}(Z,X)=0$,
 the result follows.\\

\noindent \textbf{(d)} Let $S\in  \mathcal{N}_{R} $, then  by \textbf{(c)},   we have
$R(X,Y)S=\mathcal{C}(F\mathfrak{R}(X,Y),S)$. Then, the result follows from (\ref{c(s)}) and  Lemma
\ref{car.curv.}. \\

\noindent \textbf{(e)}  Let  $X\in \N_R$.
 By the identity $D_CR=0$ \cite{r22}, we have
 $$(D_CR)(X,Y)=0,$$
 which leads to
 $$R(D_CX,Y)=0.$$
 Using (\ref{berwaldconn.}) and (\ref{cartanconn.}), we have $R([C,X],Y)=0$.
Since  $h$ is h(1), then $[C,h]=0$, from which  $[C,hX]=h[C,X]$. That is,
 $[C,hX]$ is horizontal.
 Hence, $[C,X]\in \N_R$. Consequently, by \textbf{(b)},
$[C,X]\in \mathcal{N}_{\mathfrak{R}}$.
\end{proof}

It is important to note that the converse of property \textbf{(b)} of Proposition \ref{cartan.nul} is not true in general, that is, $\N_\mathfrak{R}\not\subset \N_R$. This is shown by the next example in which the calculations are performed using MAPLE program.

\begin{example} \em{Let $M=\{x=(x_1,x_2,x_3,x_4)\in \mathbb{R}^4: x_4\neq0\}$, \\
$U=\{(x,y)\in\mathbb{R}^4\times \mathbb{R}^4:x_4\neq 0;\,y_i\neq 0,i=1,...,4 \}\subset TM$.\\
Let the energy function $E$ be defined on the open subset $U$ of $TM$ by:\\ $E=x_4y_1(y_2^3+y_3^3+y_4^3)^{1/3}$.
Then, we have:
\begin{eqnarray*}
% \nonumber to remove numbering (before each equation)
  \Omega &=& \frac{1}{2(y_2^3+y_3^3+y_4^3 )^{2/3}}\{-( y_2^3+y_3^3+y_4^3) \,dx_1\wedge dx_4-y_1y_2^2 \,dx_2\wedge dx_4-y_1y_3^2 \,dx_3\wedge dx_4\\
   &&  -2x_4y_2^2 \,(dx_1\wedge dy_2 +dx_2\wedge dy_1)-2x_4y_3^2 \,(dx_1\wedge dy_3+dx_3\wedge dy_1)
   \end{eqnarray*}
   \begin{eqnarray*}
   &&-2x_4y_4^2 \,(dx_1\wedge dy_4+dx_4\wedge dy_1)\}-\frac{2x_4}{(y_2^3+y_3^3+y_4^3 )^{5/3}}\{y_1y_2(y_3^3+y_4^3) \,dx_2\wedge dy_2 \\
   && -y_1y_2^2y_3^2 \,(dx_2\wedge dy_3+dx_3\wedge dy_2)-y_1y_2^2y_4^2 \,(dx_2\wedge dy_4+dx_2\wedge dy_4)\\
   &&+y_1y_3(y_2^3+y_4^3) \,dx_3\wedge dy_3-y_1y_3^2y_4^2 \,(dx_3\wedge dy_4+dx_4\wedge dy_3)\\&&+y_1y_4(y_3^3+y_4^3) \,dx_4\wedge dy_4  \}.
\end{eqnarray*}
The identity $i_S\Omega=-dE$ gives the following  non-vanishing  coefficients of the canonical spray $S^i$:
$$S^2=\frac{3y_2y_4}{4x_4},\quad\quad S^3=\frac{3y_3y_4}{4x_4},\quad\quad S^4=-\frac{y_2^3+y_3^3-2y_4^3}{4x_4y_4}.$$
The non-vanishing coefficients of  Barthel connection $\Gamma^i_j$ are:
$$\Gamma^2_2=\frac{3y_4}{4x_4},\quad\quad \Gamma^2_4=\frac{3y_2}{4x_4},\quad\quad \Gamma^3_3=\frac{3y_4}{4x_4},\quad\quad \Gamma^3_4=\frac{3y_3}{4x_4},$$
$$\Gamma^4_2=-\frac{3y_2^2}{4x_4y_4},\quad\quad \Gamma^4_3=-\frac{3y_3^2}{4x_4y_4},\quad\quad \Gamma^4_4=\frac{y_2^3+y_3^2+4y_4^3}{4x_4y_4^2}.$$
The independent non-vanishing components of the curvature $\mathfrak{R}^i_{jk}$ of  Barthel connection are:
$$\mathfrak{R}^2_{23}=\frac{9y_3^2}{16x_4^2y_4},\quad\quad \mathfrak{R}^2_{24}=-\frac{3(y_2^3+y_3^2+5y_4^3)}{16x_4^2y_4^2},$$
$$\mathfrak{R}^3_{23}=-\frac{9y_2^2}{16x_4^2y_4},\quad\quad \mathfrak{R}^3_{34}=-\frac{3(y_2^3+y_3^2+5y_4^3)}{16x_4^2y_4^2},$$
$$\mathfrak{R}^4_{24}=\frac{3y_2^2(y_2^3+y_3^2+5y_4^3)}{16x_4^2y_4^4},\quad\quad \mathfrak{R}^4_{34}=\frac{3y_3^2(y_2^3+y_3^2+5y_4^3)}{16x_4^2y_4^4}.$$

Now, let $X\in \N_{\mathfrak{R}}$, then $X$ can be written in the form $X=X^1h_1+X^2h_2+X^3h_3+X^4h_4$, where $X^1, X^2, X^3, X^4$  are the  components of the nullity vector $X$ with respect to the basis   $\{h_1, h_2, h_3,h_4\}$  of the horizontal space, where  $h_i:=\frac{\partial}{\partial x^i}-\Gamma^m_i\frac{\partial}{\partial y^m}$, $i,m=1,...,4$.  The equation  $\mathfrak{R}(X,Y)=0$, $\forall\, Y\in H(TM)$, is written locally in the form
  $$X^j\mathfrak{R}^i_{jk}=0.$$
This is equivalent to the   system of equations:
{\vspace{-6pt}}$$3y_3^2X^3-(y_2^3+y_3^3+5y_4^3)X^4=0,$$
{\vspace{-15pt}}$$y_3^2X^2=0,$$
{\vspace{-10pt}}$$y_2^2X^3=0.$$
From the above system, we have $X^1=t_1, \,t_1\in \mathbb{R} $ and  $X^2=X^3=0$. Then, we get $(y_2^3+y_3^3+5y_4^3)X^4=0$. Now, we have two cases, either $y_2^3+y_3^3+5y_4^3=0$ or $y_2^3+y_3^3+5y_4^3\neq 0$. Firstly, if  $y_2^3+y_3^3+5y_4^3\neq 0$, then $X^4=0$ and thus $\mu_\mathfrak{R}=1$. Secondly, if $y_2^3+y_3^3+5y_4^3=0$, then     $X^4=t_4, \,t_4\in \mathbb{R}$ and thus $X=t_1h_1+t_4h_4$ and $\mu_\mathfrak{R}=2$. We will be interested in the second case.

Calculations using MAPLE give the coefficients of Cartan connection $\Gamma^i_{jk}$ and so the components of the  h-curvature tensor $R^h_{ijk}$.  Taking into account that $y_2^3+y_3^3+5y_4^3=0$, the independent non-vanishing components $R^h_{ijk}$ are as follows:
\begin{eqnarray*}
% \nonumber to remove numbering (before each equation)
R^2_{123}&=&\frac{-9y_3^2}{32x_4^2y_1y_4},\quad\quad R^3_{123}=\frac{9y_2^2}{32x_4^2y_1y_4},\quad\quad R^1_{223}=\frac{-9y_1y_2y_3^2}{64x_4^2y_4^4}\\
 R^2_{223}&=&\frac{-9y_2^2y_3^2}{128x_4^2y_4^4},\quad\quad R^3_{223}=\frac{-9y_2(4y_2^3+6y_3^3)}{256x_4^2y_4^4},\quad\quad R^4_{223}=\frac{-45y_2y_3^2}{128x_4^2y_4^3},\\
R^3_{224}&=&\frac{-108y_2y_3}{256x_4^2y_4^2},\quad\quad\, R^4_{224}=\frac{-3y_2(y_3^3(2y_3^3-30y_4^3)-y_2^3(2y_2^3+14y_4^3)-20y_4^6)}{256x_4^2y_4^7},\\
R^3_{234}&=&\frac{27y_2^2}{64x_4^2y_4^2},\quad\quad R^4_{234}=\frac{27y_2^2y_3^2}{64x_4^2y_4^4},\quad\quad
R^1_{323}=\frac{9y_1y_2^2y_3}{64x_4^2y_4^4},\quad\quad
R^2_{323}=\frac{9y_3(y_2^3-8y_4^3)}{128x_4^2y_4^4},\\
 R^3_{323}&=&\frac{9y_2^2y_3^2}{128x_4^2y_4^4},\quad\quad
 R^4_{323}=\frac{45y_2^2y_3}{128x_4^2y_4^3},\quad\quad R^2_{324}=\frac{27y_3^2y_4^3}{64x_4^2y_4^5},\quad\quad R^4_{324}=\frac{-27y_2^2y_3^2}{64x_4^2y_4^4}, \\
R^2_{334}&=&\frac{-27y_2y_3y_4^3}{64x_4^2y_4^5},\quad\quad  R^4_{334}=\frac{3y_3(y_3^3(-3y_2^3+4y_4^3)+5y_4^3(4y_4^3+5y_2^3)-3y_2^6)}{256x_4^2y_4^7},\\ R^3_{423}&=&\frac{-9y_2^2}{32x_4^2y_4^2},\quad\quad  R^3_{424}=\frac{27y_2^2y_3}{64x_4^2y_4^3},\\
  R^2_{424}&=&\frac{-3(y_3^3(4y_2^3+38y_4^3)+2y_2^3(2y_2^3+11y_4^3)+10y_2^6)}{256x_4^2y_4^6},\quad\quad R^2_{432}=\frac{-9y_3^2}{32x_4^2y_4^2},\\
 R^2_{434}&=&\frac{27y_2y_3^2}{64x_4^2y_4^3},\quad\quad
  R^3_{434}=\frac{-34(y_3^6+22y_3^3y_4^3+y_2^3y_3^3+23y_2^3y_4^3-3y_2^6+10y_4^6)}{256x_4^2y_4^6}.
\end{eqnarray*}
Now, let $X\in \N_{{R}}$.  The equation  ${R}(X,Y)Z=0$, $\forall\, Y,Z\in H(TM)$, is written locally in the form
  $$X^j{R}^h_{ijk}=0.$$
This is equivalent to   the system of equations:
$$y_2(5y_2^3+7y_3^3+9y_4^3)X^3+12y_2y_3X^4=0,$$
$$y_3^2X^2=0,$$
$$y_2^2X^3=0.$$
The above system  has the solution  $X^1=t_1', t_1'\in \mathbb{R}$ and  $X^2=X^3=X^4=0$. Thus,  $X=t_1' h_1$ and  $\mu_{R}=1$. So, the dimension of $\N_R=1$ and the dimension of $\N_\mathfrak{R}=2$, consequently, $\N_\mathfrak{R}\not\subset \N_R$.    \qed
}
\end{example}

Nevertheless, we have some cases in which $\N_\mathfrak{R}\subset \N_R$ as the case of Landesberg spaces satisfying certain conditions.
\begin{defn}\cite{szilasi}
A Finsler space is called Landesberg if the second Cartan tensor vanishes: $\mathcal{C}'=0$ or,  equivalently, if $P=0$.
\end{defn}
\begin{thm}  Let $(M,E)$ be a Landesberg space. If, for all $X\in\N_\mathfrak{R},\,\, \overcirc{D}_{JZ}X\in \N_\mathfrak{R}$, then $\N_\mathfrak{R}\subset \N_R$ and hence $\N_\mathfrak{R}=\N_R$.
\end{thm}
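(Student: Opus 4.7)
The plan is to reduce the claim, via the Landesberg hypothesis, to a nullity statement for the Berwald h-curvature $\overcirc{R}$, and then to exploit the stability hypothesis $\overcirc{D}_{JZ}X\in\N_\mathfrak{R}$ by means of an identity linking $\overcirc{R}$ to vertical derivatives of $\mathfrak{R}$.

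\textbf{Step 1 (Landesberg reduction).} Fix $X\in\N_\mathfrak{R}$ and arbitrary $Y,Z\in\cppp$. Since $\mathcal{C}'=0$ in a Landesberg space, every term containing $\mathcal{C}'$ on the right-hand side of Lemma \ref{car.r,p,q}\,\textbf{(a)} vanishes, and the formula collapses to
\[ R(X,Y)Z \;=\; \overcirc{R}(X,Y)Z + \mathcal{C}(F\mathfrak{R}(X,Y),Z). \]
As $X\in\N_\mathfrak{R}$ forces $\mathfrak{R}(X,Y)=0$, this further reduces to $R(X,Y)Z=\overcirc{R}(X,Y)Z$, so it suffices to prove $\overcirc{R}(X,Y)Z=0$.

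\textbf{Step 2 (Berwald identity and hypothesis).} The key ingredient is the identity
\[ J\,\overcirc{R}(X,Y)Z \;=\; (\overcirc{D}_{JZ}\mathfrak{R})(X,Y), \]
which is the intrinsic counterpart of the classical local formula $\overcirc{R}^{h}{}_{ijk}=\dot\partial_{i}\mathfrak{R}^{h}{}_{jk}$ and is available from the Berwald-nullity analysis in \cite{Nabil.1}; alternatively, it can be derived directly from the expressions \eqref{berwaldconn.} together with $\mathfrak{R}(X,Y)=-v[hX,hY]$ (Remark \ref{barthelr}) and the properties $\overcirc{D}h=\overcirc{D}J=0$. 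Differentiating $\mathfrak{R}(X,Y)=0$ along $JZ$ with the Berwald connection yields
\[ (\overcirc{D}_{JZ}\mathfrak{R})(X,Y) \;+\; \mathfrak{R}(\overcirc{D}_{JZ}X,Y) \;+\; \mathfrak{R}(X,\overcirc{D}_{JZ}Y) \;=\; 0. \]
The middle term vanishes by the standing hypothesis $\overcirc{D}_{JZ}X\in\N_\mathfrak{R}$, and the last term vanishes because $X\in\N_\mathfrak{R}$; hence $(\overcirc{D}_{JZ}\mathfrak{R})(X,Y)=0$, and therefore $J\overcirc{R}(X,Y)Z=0$.

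\textbf{Step 3 (Conclusion).} Since $J$ restricted to the horizontal bundle is injective, $J\overcirc{R}(X,Y)Z=0$ gives $\overcirc{R}(X,Y)Z=0$ for horizontal $Z$; using that $\overcirc{R}(X,Y)$ commutes with $J$ and $F$ (consequences of $\overcirc{D}h=\overcirc{D}J=0$), the vanishing extends to arbitrary $Z\in\cppp$. Combined with Step 1, $R(X,Y)Z=0$ for all $Y,Z$, so $X\in\N_R$; this proves $\N_\mathfrak{R}\subset\N_R$. The reverse inclusion is Proposition \ref{cartan.nul}\,\textbf{(b)}, giving $\N_\mathfrak{R}=\N_R$.

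The principal obstacle is the identity $J\overcirc{R}(X,Y)Z=(\overcirc{D}_{JZ}\mathfrak{R})(X,Y)$ in Step 2: while the local-coordinate verification is immediate from $\overcirc{R}^{h}{}_{ijk}=\dot\partial_{i}\mathfrak{R}^{h}{}_{jk}$, an intrinsic derivation requires a careful unfolding of the Berwald formulas and Lie-bracket manipulations. Once this identity is in hand, the rest of the argument is a direct computation driven by the stability hypothesis on $\N_\mathfrak{R}$.
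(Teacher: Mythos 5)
Your proposal is correct and takes essentially the same route as the paper's proof: the Landesberg collapse of Lemma \ref{car.r,p,q}\,\textbf{(a)} to $R(X,Y)Z=\overcirc{R}(X,Y)Z+\mathcal{C}(F\mathfrak{R}(X,Y),Z)$, the identity relating $\overcirc{R}(X,Y)Z$ to $(\overcirc{D}_{JZ}\mathfrak{R})(X,Y)$ from \cite{Nabil.1}, and the Leibniz expansion of $(\overcirc{D}_{JZ}\mathfrak{R})(X,Y)$ whose surviving term $\mathfrak{R}(\overcirc{D}_{JZ}X,Y)$ is killed by the stability hypothesis. The only (cosmetic) difference is that in the paper's conventions $\overcirc{R}(X,Y)Z$ is already a vertical vector and the cited identity carries no $J$ on the left, so your Step 3 of stripping off $J$ is not needed.
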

\begin{proof}
Let $(M.E)$ be a Landesberg space.  Then,  using Lemma \ref{car.r,p,q}, we get
$$R(X,Y)Z=\overcirc{R}(X,Y)Z+\mathcal{C}(F\mathfrak{R}(X,Y),Z).$$
Let $X\in \N_\mathfrak{R}$, by the above equation and the fact that\, $\overcirc{R}(X,Y)Z=(\,\overcirc{D}_{JZ}\mathfrak{R})(X,Y)$ \cite{Nabil.1},  then, $R(X,Y)Z=-\mathfrak{R}(\,\overcirc{D}_{JZ}X,Y)$. Since $\, \overcirc{D}_{JZ}X\in \N_\mathfrak{R}, \forall X\in \N_\mathfrak{R}$, then $R(X,Y)Z=0$ and then  $X\in \N_R$. Consequently, $\N_\mathfrak{R}\subset \N_R$ and hence $\N_\mathfrak{R}=\N_R$.
\end{proof}

\begin{thm}Let $\mu_R$ be constant on an open subset $U$ of $TM$. Then,
the nullity distribution $z\mapsto \mathcal{N}_R(z)$ is completely integrable on $U$.
\end{thm}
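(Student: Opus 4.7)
The plan is to invoke the Frobenius theorem. Since $\mu_R$ is constant on $U$, the assignment $z\mapsto \N_R(z)$ defines a smooth subbundle of $H(TM)|_U$ of constant rank, so complete integrability reduces to showing that $[X,Y]$ is a local section of $\N_R$ whenever $X$ and $Y$ are.

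The first step is to verify that $[X,Y]$ is itself horizontal. By Proposition \ref{cartan.nul}(b), both $X$ and $Y$ lie in $\N_\mathfrak{R}$, hence $\mathfrak{R}(X,Y)=0$. Lemma \ref{car.[]}(c) then gives
$$[X,Y]=h(D_X Y - D_Y X),$$
and since $Dh=0$ together with the horizontality of $X,Y$ forces $D_X Y$ and $D_Y X$ to be horizontal, we even obtain the cleaner identity $[X,Y]=D_X Y - D_Y X$.

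The core step is to show $R([X,Y],Z)W=0$ for every $Z,W\in\cppp$. For this I would invoke the Bianchi identity of Lemma \ref{bianchi}(e),
$$\mathfrak{S}_{X,Y,Z}\,(D_{hX}R)(Y,Z)=\mathfrak{S}_{X,Y,Z}\,P(X,F\mathfrak{R}(Y,Z)),$$
with the two nullity fields $X,Y$ in the first two slots and an arbitrary $Z\in\cppp$ in the third. Because $X,Y\in\N_\mathfrak{R}$, the Barthel-curvature arguments $\mathfrak{R}(X,Y)$, $\mathfrak{R}(Y,Z)$ and $\mathfrak{R}(Z,X)$ all vanish, so the right-hand side is zero. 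On the left-hand side I expand each covariant derivative via the Leibniz rule, then systematically discard every summand in which $R$ occurs with a factor of $X$ or $Y$ in one of its first two slots. This annihilates the cyclic term $(D_{hZ}R)(X,Y)$ entirely, and reduces the remaining two terms to $-R(D_X Y - D_Y X,Z)=-R([X,Y],Z)$, using the horizontality computation of the previous paragraph.

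The main obstacle I anticipate is precisely the bookkeeping in this Leibniz expansion: one must verify that every leftover piece, such as $R(Y,D_X Z)W$ or $R(X,Y)D_{hZ}W$, is indeed killed by the definition of $\N_R$, and that the replacement $D_X Y - D_Y X = [X,Y]$ really is valid in spite of the nonzero (h)h-torsion of Cartan connection---a point already secured by the vanishing of $\mathfrak{R}(X,Y)$. Once these checks are in place, $R([X,Y],Z)=0$ for all $Z$, so $[X,Y]\in\N_R$, and Frobenius concludes the complete integrability of $\N_R$ on $U$.
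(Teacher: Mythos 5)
Your argument is correct and follows essentially the same route as the paper: both use Proposition \ref{cartan.nul}\textbf{(b)} to get $X,Y\in\N_\mathfrak{R}$, kill the right-hand side of the Bianchi identity of Lemma \ref{bianchi}\textbf{(e)}, expand the left-hand side by Leibniz to isolate $R(D_XY-D_YX,Z)=R([X,Y],Z)$ via Lemma \ref{car.[]}\textbf{(c)}, and use $\mathfrak{R}(X,Y)=-v[hX,hY]=0$ for horizontality of the bracket. No substantive differences.
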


\begin{proof} To prove this theorem we have to show that if  $X, Y \in \N_R$,
then $[X, Y] \in \N_R$. So, let $X, Y \in \N_R$ and $Z\in H(TM)$.
This implies that $X$ and $Y$ are horizontal and  $X, Y \in \N_\mathfrak{R}$. Then, by Lemma \ref{bianchi} \textbf{(e)}, we have
$$\mathfrak{S}_{X,Y,Z}\{(D_XR)(Y,Z)\}=\mathfrak{S}_{X,Y,Z}\{P(X,F\mathfrak{R}(Y,Z))\}.$$
Since $X, Y \in \N_\mathfrak{R}$, then $\mathfrak{R}(X,Y)=\mathfrak{R}(Y,Z)=\mathfrak{R}(Z,X)=0$.
Making use of  Lemma \ref{car.[]} and  the fact that $R$ is semi-basic and $T(hX,hY)=\mathfrak{R}(X,Y)$, we have
\begin{eqnarray*}
% \nonumber to remove numbering (before each equation)
   0&=&\mathfrak{S}_{X,Y,Z}\{(D_XR)(Y,Z)\}\\
   &=&\mathfrak{S}_{X,Y,Z}\{D_XR(Y,Z)-R(D_XY,Z)-R(Y,D_XZ)\}\\
   &=&-R(D_XY,Z) -R(Z,D_YX) \\
   &=& R(D_XY-D_YX,Z)\\
   &=& R([X,Y]+\mathfrak{R}(X,Y),Z)\\
   &=& R([X,Y],Z)+R(\mathfrak{R}(X.Y),Z)\\
   &=& R([X,Y],Z),\,\,\,\, \forall Z\in H(TM).
\end{eqnarray*}
It remains to show that $[X,Y]$ is horizontal. In fact, as $\mathfrak{R}(X,Y)=-v[hX,hY]$ \cite{Nabil.2},  $0=\mathfrak{R}(X,Y)=-v[X,Y]$, and hence $[X,Y]$ is horizontal. Hence, we have $[X,Y]\in \N_R$.
\end{proof}

\begin{rem}\em{It should be noted that the nullity distribution $\N_\mathfrak{R}$ of the curvature of  Barthel  connection is completely integrable as has been proved in \cite{Nabil.2}.}
\end{rem}

We have seen that if the index of nullity $\mu_R$ is constant, then  the nullity  distribution $\N_R$ is completely integrable. Then, according to  the Frobenius theorem, there exists a foliation of $TM$ by $\mu_R(z)$-dimensional maximal connected submanifolds which are called  the leaves, such that $\N_R(z)$ is the tangent space to the leaf at $z\in TM$. In this case we call the foliation induced by the nullity distribution  the nullity foliation.

\begin{thm} The leaves of the nullity foliations  of $\mathcal{N}_\mathfrak{R}$ and $\mathcal{N}_R$
are auto-parallel submanifolds.
\end{thm}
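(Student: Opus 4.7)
The plan is to establish that for every $X,Y$ belonging to the nullity distribution $\mathcal{N}_R$ (respectively $\mathcal{N}_{\mathfrak{R}}$), the Cartan covariant derivative $D_XY$ again lies in that distribution. This is the defining algebraic condition for the leaves of the foliation to be auto-parallel submanifolds of $TM$ with respect to $D$.

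I would first verify the horizontality of $D_XY$. Since $\mathcal{N}_R\subseteq H(TM)$ by Definition \ref{nr}, one has $Y=hY$, and the axiom $Dh=0$ of Cartan connection gives
\[
D_XY=D_X(hY)=h(D_XY)+(D_Xh)Y=h(D_XY),
\]
so $D_XY$ is horizontal. Next, for the nullity condition $R(D_XY,Z)=0$ with $Z$ an arbitrary horizontal vector field, I would apply Bianchi identity \ref{bianchi}(e). Because $\mathcal{N}_R\subseteq\mathcal{N}_{\mathfrak{R}}$ by Proposition \ref{cartan.nul}(b), the values $\mathfrak{R}(X,Y)$, $\mathfrak{R}(Y,Z)$, $\mathfrak{R}(Z,X)$ all vanish, so the right-hand side $\mathfrak{S}_{X,Y,Z}\{P(X,F\mathfrak{R}(Y,Z))\}$ vanishes. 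Expanding the cyclic sum on the left using the derivation rule and the fact that $R(X,\cdot)=R(Y,\cdot)=0$, only the terms containing $D_XY$ and $D_YX$ survive, giving $R(D_XY-D_YX,Z)=0$. Combined with Lemma \ref{car.[]}(c) and $\mathfrak{R}(X,Y)=0$, this reproduces the complete integrability $[X,Y]\in\mathcal{N}_R$.

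To upgrade this antisymmetric statement to the full $D_XY\in\mathcal{N}_R$, I would polarize within the vector subspace $\mathcal{N}_R$: since $X+tY\in\mathcal{N}_R$ for every $t\in\mathbb{R}$, once the diagonal case $D_ZZ\in\mathcal{N}_R$ for $Z\in\mathcal{N}_R$ is in hand, the coefficient of $t$ in $D_{X+tY}(X+tY)\in\mathcal{N}_R$ yields $D_XY+D_YX\in\mathcal{N}_R$, and combining with $D_XY-D_YX=[X,Y]\in\mathcal{N}_R$ delivers $2\,D_XY\in\mathcal{N}_R$. The diagonal step reduces, via $R(D_ZZ,W)=-(D_ZR)(Z,W)$ (obtained from $R(Z,\cdot)=0$ and the derivation rule), to verifying $(D_ZR)(Z,W)=0$ for $Z\in\mathcal{N}_R$; I would extract this from Bianchi \ref{bianchi}(e) specialized at coincident nullity arguments, together with the antisymmetry of $R$ and the substitution from Lemma \ref{car.r,p,q}(a), to exhibit the surviving terms as sums in which the $P$- and $\mathcal{C}$-contributions cancel against each other using Lemma \ref{car.curv.}. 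The $\mathcal{N}_{\mathfrak{R}}$-case runs in parallel with Bianchi identity \ref{bianchi}(d) in place of \ref{bianchi}(e), invoking the torsion identity $T(hX,hY)=\mathfrak{R}(X,Y)$ of Cartan connection.

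The principal technical obstacle I anticipate is the diagonal reduction. The cyclic Bianchi identity trivializes when two of its arguments coincide and both lie in $\mathcal{N}_R$, so an auxiliary argument combining the antisymmetry of $R$ with the explicit expressions in Lemmas \ref{car.r,p,q} and \ref{car.curv.} is required to produce $(D_ZR)(Z,\cdot)=0$. Once this is obtained, the polarization step and the previously-established complete integrability close the argument cleanly for both $\mathcal{N}_R$ and $\mathcal{N}_{\mathfrak{R}}$.
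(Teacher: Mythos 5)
Your reduction of the theorem to showing $D_XY\in\N_R$ for all $X,Y\in\N_R$, and the verification that $D_XY$ is horizontal via $Dh=0$, agree with the paper. The problem lies in the main step. You correctly observe that expanding $\mathfrak{S}_{X,Y,Z}\{(D_{hX}R)(Y,Z)\}=0$ for $X,Y\in\N_R$ leaves only $-R(D_{hX}Y,Z)+R(D_{hY}X,Z)$, so Bianchi \textbf{(e)} yields only the antisymmetrized identity $R(D_XY-D_YX,Z)=0$, which is the complete-integrability statement already proved earlier. Your polarization scheme would indeed upgrade this to $D_XY\in\N_R$ \emph{provided} the diagonal case $D_ZZ\in\N_R$ for $Z\in\N_R$ were available, since $D_XY+D_YX\in\N_R$ together with $D_XY-D_YX=[X,Y]\in\N_R$ gives the claim. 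But that diagonal case is exactly where your argument stops being a proof: as you yourself note, the cyclic sum of an expression antisymmetric in two slots vanishes identically when two arguments coincide, so Bianchi \textbf{(e)} with $Y=X$ reads $0=0$ and gives no control over $(D_{hX}R)(X,Z)$, equivalently over $R(D_XX,Z)=-(D_{hX}R)(X,Z)$. The proposed fallback --- cancellations among the $P$- and $\mathcal{C}$-contributions of Lemma \ref{car.r,p,q} \textbf{(a)} via Lemma \ref{car.curv.} --- is not carried out, and it is not apparent how it would produce $(D_{hX}R)(X,\cdot)=0$; in the Riemannian analogue (Maltz) the corresponding step rests on the pair symmetry $g(R(X,Y)Z,W)=g(R(Z,W)X,Y)$, which the Cartan h-curvature does not enjoy in general. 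So the proposal is incomplete at its decisive point.

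For comparison, the paper does not take the polarization route at all: it passes from $\mathfrak{S}_{X,Y,Z}\{(D_{X}R)(Y,Z)\}=0$ directly to $\mathfrak{S}_{X,Y,Z}\{R(D_{X}Y,Z)\}=0$ and thence to $R(D_XY,Z)=0$, thereby discarding the surviving term $-R(Z,D_{hY}X)$ that your more careful expansion retains. In other words, the paper obtains the symmetric part for free at precisely the spot where you (correctly) see that only the antisymmetric part follows from the Bianchi identity. Your diagnosis of the difficulty is therefore sharper than the paper's treatment, but neither your polarization sketch nor the paper's one-line passage actually establishes $D_XX\in\N_R$; supplying that diagonal case, or an independent argument for $R(D_XY,Z)=0$, is what is still missing. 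The same remark applies verbatim to the $\N_\mathfrak{R}$ case run on Bianchi \textbf{(d)}.
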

\begin{proof}
To prove that  $\N_R$ is auto-parallel  with respect to Cartan connection, we have to show that if $X,Y \in \N_{R}$,
 then  $D_XY\in \N_{R}$.

 Let $X,Y \in \N_{R}$, then $X,Y \in \N_{\mathfrak{R}}$ and $X,Y\in H(TM)$.
 As $Dh=0$, then $D_{X}hY=hD_{X}Y$,
 i.e., $D_{X}Y\in H(TM)$. By Lemma \ref{bianchi} \textbf{(e)}, we have
$$\mathfrak{S}_{X,Y,Z}\{(D_X{R})(Y,Z)\}=0.$$
Consequently
 $$\mathfrak{S}_{X,Y,Z}\{{R}(D_XY,Z)\}=0.$$

\noindent Hence  ${R}(D_XY,Z)=0 \,\, \forall Z\in \cppp$ and  $D_XY\in \N_{R}$.\\

 Similarly, we  show that if $X,Y\in \N_\mathfrak{R}$, then $D_XY\in \N_\mathfrak{R}$.
  By Lemma \ref{bianchi} \textbf{(d)}, we have
$$\mathfrak{S}_{X,Y,Z}\{(D_X\mathfrak{R})(Y,Z)\}=\mathfrak{S}_{X,Y,Z}\mathcal{C}\{(F\mathfrak{R}(X,Y),Z)\}.$$
Since $X,Y\in\N_\mathfrak{R}$, then
$$\mathfrak{S}_{X,Y,Z}\{(D_X\mathfrak{R})(Y,Z)\}=0.$$
Consequently,  $\mathfrak{R}(D_XY,Z)=0 \,\, \forall Z\in \cppp$ and $D_XY\in \N_\mathfrak{R}$.
\end{proof}
It is well known that the concepts of auto-parallel submanifold and totally geodesic
submanifold coincide in  Riemannian geometry  \cite{kobayashi}.
 This is not true,  in general. However,
  every auto-parallel submanifold   is totally geodesic \cite{E.cartan}. So,
 we have the following corollary.
\begin{cor}
The leaves of the  nullity foliations $\N_\mathfrak{R}$ and $\N_R$ are totally geodesic submanifolds.
\end{cor}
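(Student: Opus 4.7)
The plan is to derive this corollary directly from the preceding theorem, which established that the leaves of both $\N_{\mathfrak{R}}$ and $\N_R$ are auto-parallel submanifolds with respect to Cartan connection $D$. The only additional ingredient required is the classical implication already recalled in the paragraph just above the statement: every auto-parallel submanifold is totally geodesic \cite{E.cartan}. So the proof should be essentially one line, together with a brief unpacking of what the two notions mean in the present setting.

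Concretely, I would proceed as follows. First I would recall that, by the previous theorem, for any $X,Y\in\N_R$ one has $D_XY\in\N_R$, and similarly for $\N_{\mathfrak{R}}$; this is precisely the statement that the leaves of the respective nullity foliations are invariant under $D$, i.e., auto-parallel. Next I would note that on any such leaf $L$, every curve which is a geodesic of the induced connection is in fact a geodesic of the ambient Cartan connection, since the velocity remains tangent to $L$ along the curve by auto-parallelism; equivalently, a geodesic of $D$ starting tangent to $L$ cannot leave $L$. This is exactly the definition of a totally geodesic submanifold, and the corollary follows for both $\N_R$ and $\N_{\mathfrak{R}}$.

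There is really no substantial obstacle, since the hard work (establishing $D_XY\in\N_R$ and $D_XY\in\N_{\mathfrak{R}}$ via the Bianchi identities of Lemma \ref{bianchi}\,\textbf{(d)}, \textbf{(e)}) was already carried out in the theorem. The only point deserving a brief comment, to keep the reader aligned with the preceding discussion, is that in Finsler (as opposed to Riemannian) geometry the converse fails, so the corollary is strictly weaker than the theorem; this is exactly the asymmetry pointed out in the sentence preceding the statement. Thus the proof reduces to citing the theorem and invoking the general fact from \cite{E.cartan}.
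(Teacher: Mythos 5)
Your proposal is correct and matches the paper exactly: the corollary is obtained by combining the preceding theorem (the leaves are auto-parallel) with the general fact, already stated in the paragraph before the corollary, that every auto-parallel submanifold is totally geodesic \cite{E.cartan}. The paper gives no further proof, so your one-line derivation is precisely its intended argument.
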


\begin{defn}\label{hisotropic}{{\cite{r14}, \cite{special FS} }} A Finsler space $(M,E)$, where  $ \dim M\geq 3$,
is said to be $h$-isotropic  if there exists a scalar function  $k_{o}$ such that the $h$-curvature tensor
$R$ of Cartan connection  has the form
$$R({X},{Y}){Z}=k_{o} \{g({X},{Z})
{Y}-g({Y},{Z}){X} \},\,\,\, \forall \, X,Y,Z\in \cppp.$$
\end{defn}

\begin{thm} For  an  $h$-isotropic Finsler space,   the index of nullity $\mu_R$ takes its maximal  value, i.e. $\mu_R=n$.
\end{thm}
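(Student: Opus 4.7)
The plan is to force the scalar $k_o$ of Definition \ref{hisotropic} to vanish by exploiting the semi-basic nature of the Cartan $h$-curvature $R$; once $k_o = 0$, the $h$-isotropic identity collapses to $R \equiv 0$, so $\N_R(z) = H_z(TM)$ and $\mu_R = \dim H_z(TM) = n$.

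First I would verify that $R$ is semi-basic in its first argument, i.e.\ $R(V, Y) \equiv 0$ whenever $V$ is vertical. This is read off term-by-term from the formula in Lemma \ref{car.r,p,q}\textbf{(a)}: writing $V = JW$, one has $hV = 0$, which kills $\overcirc{R}(V, Y)Z$ and $(D_{hV}\mathcal{C}')(Y, Z)$; the tensor $\mathcal{C}'(U, W)$ is semi-basic in both slots (depending on $hU$ and $JW$), so $\mathcal{C}'(JW, \cdot) = 0$ and $\mathcal{C}'(\cdot, JW) = 0$, killing the remaining $\mathcal{C}'$-terms; finally $\mathfrak{R}(JW, Y) = -v[hJW, hY] = 0$ eliminates $\mathcal{C}(F\mathfrak{R}(V, Y), Z)$.

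Next I would combine this semi-basicness with the $h$-isotropic identity. Fix a vertical $V \in V_z(TM) \setminus \{0\}$ and a horizontal $U \in H_z(TM) \setminus \{0\}$. Since $V_z(TM) \cap H_z(TM) = \{0\}$, the vectors $V$ and $U$ are linearly independent in $T_z(TM)$. For every $Z \in T_z(TM)$, the equality $R(V, U)Z = 0$ together with Definition \ref{hisotropic} yields
\[
0 = k_o\bigl\{g(V, Z)\,U - g(U, Z)\,V\bigr\}.
\]
The linear independence of $U$ and $V$ forces $k_o\, g(V, Z) = 0$ and $k_o\, g(U, Z) = 0$ for every $Z$. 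By \eqref{metricg}, $g(X, Y) = \Omega(X, FY)$, with $F$ an isomorphism and $\Omega$ of maximal rank, so $g$ is nondegenerate and some $Z$ satisfies $g(V, Z) \neq 0$; this forces $k_o = 0$.

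Finally, $k_o \equiv 0$ turns the $h$-isotropic formula into $R \equiv 0$, so every $X \in H_z(TM)$ is a nullity vector, giving $\N_R(z) = H_z(TM)$ and $\mu_R = n$. The main obstacle is the first step — verifying the semi-basicness of $R$ by tracking the $h$- and $J$-dependencies of $\overcirc{R}$, $\mathcal{C}$, $\mathcal{C}'$, and $\mathfrak{R}$ in Lemma \ref{car.r,p,q}\textbf{(a)}; the remaining arguments are routine linear algebra together with the nondegeneracy of $g$.
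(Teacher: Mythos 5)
Your argument is logically valid as a deduction from Definition \ref{hisotropic} read literally, but it takes a genuinely different route from the paper's, and the difference is worth spelling out. The paper starts from a \emph{nonzero nullity vector} $X\in\N_R$ (whose existence it takes from Proposition \ref{cartan.nul}\textbf{(a)}), pairs the isotropy identity with $g(\cdot\,,W)$, and forces $k_o=0$ by a double trace over $T_z(TM)$, using that the trace of $g$ is $2n$; it never touches the semi-basic structure of $R$. You instead feed a \emph{vertical} vector into the first slot, invoke the semi-basicness of $R$ (a fact the paper itself uses in the integrability theorem, and which your term-by-term check of Lemma \ref{car.r,p,q}\textbf{(a)} correctly establishes), and kill $k_o$ by linear independence plus nondegeneracy of $g$. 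What your route buys is that it needs no nullity vector at all, so it does not lean on the unproved assertion $\N_R\neq\phi$. But that is also its weakness: you have in effect shown that \emph{no} Finsler space satisfies the identity of Definition \ref{hisotropic} with $k_o\neq 0$ when $X,Y,Z$ range over all of $\mathfrak{X}(TM)$, simply because the left-hand side is semi-basic and the right-hand side is not. Your proof therefore succeeds only because the definition, transcribed from the pullback formalism, is degenerate under the literal reading; under the intended reading, in which the arguments are inserted so that both sides are semi-basic (e.g. $R(X,Y)Z=k_o\{g(hX,hZ)hY-g(hY,hZ)hX\}$), your vertical substitution yields $0=0$ and the argument collapses, whereas the paper's trace computation survives with the trace taken over the horizontal subbundle (giving $(n-1)k_o\,g(X,Z)=0$, which still forces $k_o=0$ since $n\geq 3$). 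So regard your proof as correct relative to the stated definition but fragile; the robust version is the trace argument applied to a nonzero horizontal nullity vector.
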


\begin{proof}
Let $X$ be a non zero nullity vector in  $\N_R$ and $Y,Z,W\in \cppp$. Then, by  Definition  \ref{hisotropic}, we have
\begin{eqnarray*}
% \nonumber to remove numbering (before each equation)
  0 &=& k_{o} \{g({X},{Z})
{Y}-g({Y},{Z}){X} \} \\
   &=&k_{o} \{g(g({X},{Z}){Y},W)-g(g({Y},{Z}){X},W) \}  \\
  &=& k_{o} \{g({Y},W)g({X},{Z})-g({X},W)g({Y},{Z}) \}.
\end{eqnarray*}
 As  $g$ is a metric on $TM$, its  trace  is thus $2n$.  Taking the trace with respect to the pair $Y$ and $W$, we get
$$k_{o} \{2ng({X},{Z})-g({X},Z) \}=0,$$
Again, taking the trace of the above equation, we have
$$ 2n(2n-1)k_{o}=0.$$
which gives   $k_{o}=0$. Consequently, $R=0$ and  hence $\mu_R=n$.
\end{proof}

\begin{defn}{{\cite{szilasi}, \cite{special FS} }} A Finsler space $(M,E)$,
is said to be Berwald  space  if  the $hv$-curvature tensor\,
$\overcirc{P}$ of Berwald connection  vanishes or, equivalently, $D_{hX}\mathcal{C}=0$ for all $X\in \cppp$.
\end{defn}

\begin{thm} For  a Berwald space,   the index of nullity $\mu_\mathfrak{R}$ of  $\N_\mathfrak{R}$ takes its maximal value if and only if   the index of nullity $\mu_{R}$ of  $\N_{R}$ takes its maximal value.
\end{thm}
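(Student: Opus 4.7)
The approach reduces both maximal‑index statements to the vanishing of the corresponding curvature tensor, and then splits the equivalence into two directions, only one of which actually requires the Berwald hypothesis. First I would observe that since $\dim H_z(TM)=n$ at every point, the identity $\mu_{\mathfrak{R}}=n$ is equivalent to $\mathcal{N}_{\mathfrak{R}}(z)=H_z(TM)$ everywhere, which in turn is equivalent to $\mathfrak{R}=0$ identically; the same reasoning yields $\mu_R=n\iff R=0$. Thus the claim is really the bi-implication $\mathfrak{R}=0\Longleftrightarrow R=0$ in a Berwald space.

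For the direction $\mu_R=n\Rightarrow\mu_{\mathfrak{R}}=n$, no Berwald assumption is actually needed. If $R=0$, then in particular the canonical spray $S$, being horizontal, satisfies $R(S,Y)Z=0$ for all $Y,Z$, so $S\in\mathcal{N}_R$. Proposition \ref{cartan.nul}\textbf{(d)} then immediately forces $\mathfrak{R}=0$, hence $\mu_{\mathfrak{R}}=n$.

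For the converse $\mu_{\mathfrak{R}}=n\Rightarrow\mu_R=n$, the Berwald condition is essential. Since every Berwald space is Landesberg (i.e.\ $\mathcal{C}'=0$, a standard implication already tacitly used in the Landesberg theorem above), Lemma \ref{car.r,p,q}\textbf{(a)} collapses to
$$R(X,Y)Z=\overcirc{R}(X,Y)Z+\mathcal{C}(F\mathfrak{R}(X,Y),Z).$$
Assuming $\mathfrak{R}=0$, the second summand vanishes identically, and for the first I would invoke the identity $\overcirc{R}(X,Y)Z=(\overcirc{D}_{JZ}\mathfrak{R})(X,Y)$ cited earlier from \cite{Nabil.1}, which shows that $\mathfrak{R}=0$ forces $\overcirc{R}=0$. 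Combining, $R=0$, so $\mu_R=n$.

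The only delicate point is the passage from Berwald to Landesberg, i.e.\ the vanishing of $\mathcal{C}'$; once this is granted, the rest is pure substitution into the Cartan‑curvature formula of Lemma \ref{car.r,p,q}\textbf{(a)}. Everything else needed—Proposition \ref{cartan.nul}\textbf{(d)}, the simplified expression for $R$ in the Landesberg case, and the Berwald‑curvature identity $\overcirc{R}=\overcirc{D}_{J\cdot}\mathfrak{R}$—is already in place in the paper, so the argument amounts to assembling these pieces in the two directions above.
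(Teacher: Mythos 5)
Your proof is correct and follows essentially the same route as the paper: both arguments reduce the statement to the equivalence $R=0\Leftrightarrow\mathfrak{R}=0$, use the Berwald hypothesis (via $\mathcal{C}'=0$) to collapse Lemma \ref{car.r,p,q}\textbf{(a)} to $R(X,Y)Z=\overcirc{R}(X,Y)Z+\mathcal{C}(F\mathfrak{R}(X,Y),Z)$, and recover $\mathfrak{R}$ from $R$ by evaluating on the canonical spray. Your only deviation is in the direction $R=0\Rightarrow\mathfrak{R}=0$, where you invoke Proposition \ref{cartan.nul}\textbf{(d)} instead of setting $Z=S$ in the collapsed formula as the paper does --- a cosmetic difference resting on the same identity $R(X,Y)S=\mathfrak{R}(X,Y)$, which lets you observe, correctly, that this direction needs no Berwald hypothesis.
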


\begin{proof} Let  $(M,E)$ be a Berwald space and so $\mathcal{C}'=0$ \cite{szilasi}.  Hence, by
 Lemma \ref{car.r,p,q} \textbf{(a)}, the h-curvature of Cartan connection is written in the form
\begin{equation}\label{star}
R(X,Y)Z  = \overcirc{R}(X,Y)Z+\mathcal{C}(F\mathfrak{R}(X,Y),Z).
\end{equation}
Now, let $\mu_\mathfrak{R}=n$.  Then $\mathfrak{R}=0$, which is  equivalent to \, $\overcirc{R}=0$ \cite{Nabil.1}. Thus, Equation (\ref{star}) yields  $R=0$. Consequently, $\mu_{R}=n$.

Conversely,  let $\mu_{R}=n$. Hence,  by (\ref{star}), \,$\overcirc{R}(X,Y)Z+\mathcal{C}(Z,F\mathfrak{R}(X,Y))=0$. Setting $Z=S$ in this equation, we have  $\overcirc{R}(X,Y)S=0$.  But \,$\overcirc{R}(X,Y)S=\mathfrak{R}(X,Y)$ \cite{Nabil.1}. Thus, $\mathfrak{R}=0$, consequently, $\mu_\mathfrak{R}=n$.
\end{proof}

%%%%%%%%%%%%%%%%%%%%%%%%%%%%%%%%%%%%%%%%%%%%%%%%%%%%%%%%%%%%%%%%%%%%&&&&&&&&&&&&&&&&&&&&&&&&&&&&&&&&&&&&&&&&&&&&&&&&&&&&&&&&&&&&&&&&&&&&
\Section{Nullity distribution of Cartan hv-curvature}

 In this section, we  study the nullity distribution of the hv-curvature of  Cartan connection. We show that the nullity distribution $\N_P$ of the hv-curvature $P$ is not  completely integrable. We impose a certain condition to make $\N_P$ completely integrable. We present a class of Finsler spaces which guarantees the  possibility of such a condition.

\begin{defn} Let $P$ be the hv-curvature of Cartan connection.
The  nullity space  of $P$ at a point $z\in TM$ is the subspace of $H_z(TM)$ defined by
$$\mathcal{N}_P(z):=\{X\in H_z(TM) : \,  P(X,Y)=0, \, \,\forall Y\in T_zTM\}.$$
The dimension of $\mathcal{N}_P(z)$, denoted by $\mu_P(z)$, is the index of nullity of $P$ at $z$.
\end{defn}
\begin{prop}\label{p.nullity}The  nullity distribution  of $P$ has the following properties:
\begin{description}
\item[(a)]$ \N_P\neq\phi$.

  \item[(b)]$S\in \N_P$.

  \item[(c)] If $X\in \N_P(z)$, then $\mathcal{C}'(X,Y)=0,\, \forall\, Y\in T_zTM.$

  \item[(d)] If $X, Y \in \N_P\cap \N_\mathfrak{R}$,   then
$R(X,Y)Z=\mathcal{C}'([X,Y],Z).$
\end{description}
\end{prop}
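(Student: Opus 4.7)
The plan is to dispatch (a)--(c) by the algebraic properties of $P$ collected in Lemma \ref{car.curv.}, and to establish (d) by a direct expansion via Lemma \ref{car.r,p,q}(a).

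First I would prove (b), which immediately yields (a). The canonical spray $S$ is horizontal ($hS=S$, since the semispray associated with the Barthel connection is horizontal with respect to it), and Lemma \ref{car.curv.}(c) gives $P(S,Y)Z=P(Y,S)Z=0$ for every $Y,Z\in\cppp$, so $S\in\N_P$. For (c), I would use Lemma \ref{car.curv.}(b), i.e.\ $P(X,Y)S=\mathcal{C}'(X,Y)$: if $X\in\N_P$, setting $Z=S$ in $P(X,Y)Z=0$ yields $\mathcal{C}'(X,Y)=0$ for every $Y$.

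For (d), I would start from Lemma \ref{car.r,p,q}(a) and cancel its six summands under the hypotheses $X,Y\in\N_P\cap\N_\mathfrak{R}$. Part (c), applied both to $X$ and to $Y$, gives $\mathcal{C}'(X,\cdot)=\mathcal{C}'(Y,\cdot)=0$, which annihilates the two terms $\mathcal{C}'(F\mathcal{C}'(X,Z),Y)$ and $\mathcal{C}'(F\mathcal{C}'(Y,Z),X)$. The hypothesis $X,Y\in\N_\mathfrak{R}$ gives $\mathfrak{R}(X,Y)=0$, killing $\mathcal{C}(F\mathfrak{R}(X,Y),Z)$. The remaining derivative terms are expanded by the Leibniz rule:
\[
(D_{hX}\mathcal{C}')(Y,Z)=D_{hX}\!\bigl(\mathcal{C}'(Y,Z)\bigr)-\mathcal{C}'(D_{hX}Y,Z)-\mathcal{C}'(Y,D_{hX}Z),
\]
and $\mathcal{C}'(Y,\cdot)=0$ together with $hX=X$ collapse this to $-\mathcal{C}'(D_XY,Z)$; symmetrically, $(D_{hY}\mathcal{C}')(X,Z)=-\mathcal{C}'(D_YX,Z)$.

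The final step uses $Dh=0$ (so $D_XY$ and $D_YX$ remain horizontal) and Lemma \ref{car.[]}(c), which in the form $h(D_{hX}Y-D_{hY}X)=[hX,hY]+\mathfrak{R}(X,Y)$ yields $D_XY-D_YX=[X,Y]+\mathfrak{R}(X,Y)=[X,Y]$. Substituting into the simplified expression for $R(X,Y)Z$ produces the stated identity. The only point I expect to require care is accounting for the residual Berwald h-curvature term $\overcirc{R}(X,Y)Z$ that appears in Lemma \ref{car.r,p,q}(a): one expects it to be controlled via the identity $\overcirc{R}(X,Y)Z=(\overcirc{D}_{JZ}\mathfrak{R})(X,Y)$ used earlier in the paper, which under $X,Y\in\N_\mathfrak{R}$ reduces to $-\mathfrak{R}(\overcirc{D}_{JZ}X,Y)$; handling this residual term cleanly is the one nontrivial step of the argument.
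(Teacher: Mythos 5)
Your proofs of (a)--(c) are exactly the paper's, and your route through (d) -- cancel the terms of Lemma \ref{car.r,p,q} \textbf{(a)} using part (c) and $\mathfrak{R}(X,Y)=0$, apply the Leibniz rule to the two $D\mathcal{C}'$ terms, and convert $D_{hX}Y-D_{hY}X$ into $[X,Y]$ via Lemma \ref{car.[]} \textbf{(c)} -- is also the paper's argument. The one step you flag but do not close, the residual Berwald term, is in fact immediate and needs no extra care: expanding $\overcirc{R}(X,Y)Z=(\,\overcirc{D}_{JZ}\mathfrak{R})(X,Y)=\overcirc{D}_{JZ}\bigl(\mathfrak{R}(X,Y)\bigr)-\mathfrak{R}(\,\overcirc{D}_{JZ}X,Y)-\mathfrak{R}(X,\overcirc{D}_{JZ}Y)$, the first and third terms vanish because $X\in\N_\mathfrak{R}$, and the second vanishes because $\mathfrak{R}$ is alternating and $Y\in\N_\mathfrak{R}$, so that $\mathfrak{R}(W,Y)=-\mathfrak{R}(Y,W)=0$ for every $W$, in particular for $W=\overcirc{D}_{JZ}X$. (This is precisely why the earlier Landesberg theorem, where only $X$ is assumed to be in $\N_\mathfrak{R}$, is left with the term $-\mathfrak{R}(\,\overcirc{D}_{JZ}X,Y)$, whereas here both arguments are nullity vectors and nothing survives.) One small caveat: carried through literally, your Leibniz computation gives $R(X,Y)Z=-\mathcal{C}'(D_{hX}Y-D_{hY}X,Z)=-\mathcal{C}'([X,Y],Z)$, i.e.\ the stated identity up to the order of $X$ and $Y$ in the bracket; the paper's own derivation produces the same sign, so this is a shared convention or typo rather than a defect of your argument, but you should not claim the substitution ``produces the stated identity'' without checking it.
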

\begin{proof}~\par

\noindent \textbf{(b)} Follows from the fact that $P(S,X)Y=P(X,S)Y=0$  (Lemma \ref{car.curv.}).\\

\noindent \textbf{(c)} Let $X\in \N_P(z)$,
\begin{eqnarray*}
% \nonumber to remove numbering (before each equation)
  X\in \N_P(z) &\Longrightarrow&P(X,Y)Z=0\quad \forall\, Y,Z\in T_zTM  \\
   &\Longrightarrow& P(X,Y)S=0\quad \,\forall \,Y\in T_zTM \\
   &\Longrightarrow& \mathcal{C}'(X,Y)=0 \quad \,\ \,\,\forall\, Y\in T_zTM.
\end{eqnarray*}
\noindent \textbf{(d)}
Let $X$, $Y \in \N_P\cap \N_\mathfrak{R}$. Then, by  Proposition   \ref{p.nullity} \textbf{(c)}, Lemma \ref{car.r,p,q} \textbf{(a)} and the identity \, $\overcirc{R}(X,Y)Z=(\,\overcirc{D}_{JZ}\mathfrak{R})(X,Y)$ \cite{Nabil.1}, we have
$$ R(X,Y)Z  = (D_{hX}\mathcal{C}')(Y,Z)-(D_{hY}\mathcal{C}')(X,Z). $$
By Lemma \ref{car.[]} \textbf{(c)} and the fact that $\mathcal{C}'$ is semi-basic, we get
$$R(X,Y)Z=\mathcal{C}'([hX,hY],Z).$$
Hence, the result follows.
\end{proof}

\begin{thm}
 For a Landesberg space, the nullity distributions $\N_R$ and  $\N_{R^\circ}$  coincide, where $\N_{{R^\circ}}$ is the nullity distribution of the h-curvature\, $\overcirc{R}$ of Berwald connection.
\end{thm}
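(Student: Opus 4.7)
The plan is to exploit the explicit formula for the Cartan h-curvature $R$ in terms of the Berwald h-curvature $\overcirc{R}$, already provided by Lemma \ref{car.r,p,q}\,\textbf{(a)}, and then observe that the Landesberg hypothesis collapses it down to just one extra term that vanishes on $\N_\mathfrak{R}$.

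First, I would apply the Landesberg condition $\mathcal{C}'=0$ (hence $D_{hX}\mathcal{C}'=0$) to Lemma \ref{car.r,p,q}\,\textbf{(a)} to obtain the simplified identity already used in the paper, namely
\begin{equation*}
R(X,Y)Z \;=\; \overcirc{R}(X,Y)Z \;+\; \mathcal{C}(F\mathfrak{R}(X,Y),Z),
\end{equation*}
which is the same as Equation (\ref{star}). The proof will then be reduced to showing that whenever $X$ lies in either nullity space, the correction term $\mathcal{C}(F\mathfrak{R}(X,Y),Z)$ vanishes, i.e.\ $X\in\N_\mathfrak{R}$.

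For the inclusion $\N_R\subseteq\N_{R^\circ}$, I would take $X\in\N_R$ and invoke Proposition \ref{cartan.nul}\,\textbf{(b)} to get $X\in\N_\mathfrak{R}$, so that $\mathfrak{R}(X,Y)=0$ and the correction term vanishes; the displayed equation then forces $\overcirc{R}(X,Y)Z=R(X,Y)Z=0$, hence $X\in\N_{R^\circ}$. For the reverse inclusion $\N_{R^\circ}\subseteq\N_R$, I would take $X\in\N_{R^\circ}$ and use the Berwald identity $\overcirc{R}(X,Y)S=\mathfrak{R}(X,Y)$ (quoted from \cite{Nabil.1} and already used in the Berwald theorem just above) to conclude $\mathfrak{R}(X,Y)=0$ for all $Y$, so again $X\in\N_\mathfrak{R}$; substituting back into the displayed equation gives $R(X,Y)Z=\overcirc{R}(X,Y)Z=0$, so $X\in\N_R$.

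There is no genuine obstacle here: the argument is a direct unwrapping of the Landesberg simplification of Lemma \ref{car.r,p,q}\,\textbf{(a)} combined with Proposition \ref{cartan.nul}\,\textbf{(b)} and the identity $\overcirc{R}(\cdot,\cdot)S=\mathfrak{R}$. The only small point requiring care is to make sure the horizontality of the nullity vectors is respected throughout, but since $\N_R$ and $\N_{R^\circ}$ are by definition subspaces of $H_z(TM)$, and both the Berwald and Cartan h-curvatures depend only on the horizontal parts of their arguments, this causes no difficulty.
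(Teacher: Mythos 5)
Your proposal is correct and follows essentially the same route as the paper: both reduce to the Landesberg simplification $R(X,Y)Z=\overcirc{R}(X,Y)Z+\mathcal{C}(F\mathfrak{R}(X,Y),Z)$ of Lemma \ref{car.r,p,q}\,\textbf{(a)} and then show each nullity condition forces $X\in\N_\mathfrak{R}$ so that the correction term drops out. The only cosmetic difference is that the paper cites \cite{Nabil.1} for $\N_{R^\circ}\subseteq\N_\mathfrak{R}$ where you rederive it from $\overcirc{R}(X,Y)S=\mathfrak{R}(X,Y)$, which is the same fact.
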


\begin{proof}
Let $(M,E)$ be a Landesberg space. Then, the hv-curvature $P$ of Cartan connection  vanishes and thus $\N_P=H(TM)$. Consequently, $\mathcal{C}'=0$, by Proposition \ref{p.nullity}~\textbf{(c)}. Hence, by  Lemma \ref{car.r,p,q} \textbf{(a)}, we get
$$R(X,Y)Z=\overcirc{R}(X,Y)Z+\mathcal{C}(F\mathfrak{R}(X,Y),Z).$$
Let $X\in \N_R$, then $X\in \N_\mathfrak{R}$ and thus $\mathfrak{R}(X,Y)=0$, hence, $X\in \N_{{R^\circ}}$. Consequently, $\N_R\subseteq \N_{{R^\circ}}$. Conversely, let $X\in \N_{{R^\circ}}$, then $X\in \N_\mathfrak{R}$ \cite{Nabil.1} and thus $\mathfrak{R}(X,Y)=0$, hence, $X\in \N_{{R}}$. Consequently, $\N_{{R^\circ}}\subseteq \N_{{R}}$.
\end{proof}

The nullity distribution $\N_P$ is not in general completely integrable as shown by the following example.

\begin{example} \em{Let $M=\{x=(x_1,x_2,x_3)\in \mathbb{R}^3:x_2\neq0\}$, \\
$U=\{(x,y)\in\mathbb{R}^3\times \mathbb{R}^3: x_2\neq 0;\,y_1,y_2\neq 0\}\subset TM$.\\
Let the energy function $E$ be defined on U by: $E=e^{-x_1}(e^{-x_1x_3}y_1^2y_3+x_2y_2^3)^{2/3}$.
\vspace{5pt}

For simplicity, let $\sigma_1:=e^{-x_1x_3}y_1^2y_3+x_2y_2^3$,  $\sigma_2:=7e^{-x_1x_3}y_1^2y_3+12x_2y_2^3$ and $\sigma_3:=e^{x_1x_3}(5e^{-x_1x_3}y_1^2y_3+3x_2y_2^3)$.
Then, the non-vanishing components $P^h_{ijk}$ of the hv-curvature tensor $P$ are:
$$P^1_{111}=\frac{-3x_2y_2^3}{32y_1\sigma_1},\quad\quad P^2_{111}=\frac{-y_2\sigma_2}{32y_1^2\sigma_1},
\quad\quad P^3_{111}=\frac{-9x_2y_2^3y_3}{32y_1^2\sigma_1},\quad\quad P^1_{112}=\frac{3x_2y_2^2}{32\sigma_1}=P^1_{121},$$
$$P^2_{112}=\frac{\sigma_2}{32y_1\sigma_1}=P^2_{121},\quad\quad P^3_{112}=\frac{9x_2y_2^2y_3}{32y_1\sigma_1}=P^3_{121},\quad\quad
P^1_{122}=\frac{-3x_2y_1y_2}{32\sigma_1},$$
$$P^2_{122}=\frac{-\sigma_2}{32y_2\sigma_1},\quad\quad P^3_{122}=\frac{-x_2y_2y_3}{32\sigma_1},\quad\quad P^1_{211}=\frac{3x_2y_2^2}{32\sigma_1},\quad\quad P^2_{211}=\frac{x_2y_2^3}{16y_1\sigma_1},$$
$$P^3_{211}=\frac{3x_2y_2^2\sigma_3}{16y_1^3\sigma_1},\quad\quad P^1_{221}=\frac{-3x_2y_1y_2}{32\sigma_1}=P^1_{212},\quad\quad P^2_{221}=\frac{-3x_2y_2^2}{16\sigma_1}=P^2_{212}
,$$
$$P^3_{221}=\frac{-3x_2y_2}{16y_1^2\sigma_1}=P^3_{212},\quad\quad P^1_{222}=\frac{3x_2y_1^2}{32\sigma_1},\quad\quad P^2_{222}=\frac{3x_2y_1y_2}{16\sigma_1},\quad\quad
P^3_{222}=\frac{3x_2\sigma_3}{16\sigma_1},$$
$$P^2_{311}=\frac{y_1y_2e^{-x_1x_3}}{32\sigma_1},\quad\quad P^3_{311}=\frac{-3x_2y_2^3}{32y_1\sigma_1},\quad\quad P^2_{312}=\frac{-x_2y_1^2e^{-x_1x_3}}{32y_1^3\sigma_1}=P^2_{321},$$
$$P^3_{312}=\frac{3x_2y_2^2}{32\sigma_1}=P^3_{321},\quad\quad P^2_{322}=\frac{y_1^3e^{-x_1x_3}}{32y_2\sigma_1},\quad\quad P^3_{322}=\frac{-3x_2y_1y_2}{32\sigma_1}.$$
\vspace{5pt}
Now, let $X\in \N_{P}$.  The equation  ${P}(X,Y)Z=0, \forall\,\, Y,Z\in H(TM)$,  is written locally in the form
  $$X^j{P}^h_{ijk}=0.$$
This yields  the  system of equations
$$y_2X^1-y_1X^2=0.$$
 Thus, the solution of the above system is  $X^1=t_1, X^2=\frac{y_2}{y_1}t_1$ and $X^3=t_3,\,t_1,t_3\in \mathbb{R}$.    Hence,  $X=t_1( h_1+\frac{y_2}{y_1}h_2)+t_3h_3$ and  $\mu_{P}=2$. Now, let $X, Y\in \N_P$ be such that  $X=h_1+\frac{y_2}{y_1}h_2$ and $Y=h_3$. By  simple calculations,  the bracket $[X,Y]=[h_1+\frac{y_2}{y_1}h_2,h_3]=-\frac{1}{2}y_1\frac{\partial}{\partial y_1}+y_3\frac{\partial}{\partial y_3}$,  which is vertical. Consequently, the nullity distribution $\N_P$ is not complectly integrable. \qed
}
\end{example}

Nevertheless, we have

\begin{thm}\label{p.is.c.i.}
Let  $\mu_P$ be constant on an open subset $U$ of $TM$.  The  nullity distribution
 $\N_P$ is completely integrable on $U$ if and only if,  for all $X,Y \in \N_P$,
 $ \mathfrak{R}(X,Y)=0$ and $ (D_{JZ}R)(X,Y)=R(Y,F{\mathcal{C}}(X,Z))-R(X,F{\mathcal{C}}(Y,Z))$.
\end{thm}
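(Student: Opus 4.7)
The plan is to use the Frobenius criterion: with $\mu_P$ constant, $\N_P$ is completely integrable on $U$ if and only if $[X,Y]$ is a local section of $\N_P$ for every pair of local sections $X,Y$ of $\N_P$. Since $\N_P$ consists of horizontal vectors at each point, membership of $[X,Y]$ in $\N_P$ amounts to two separate requirements: (i) $[X,Y]$ is horizontal, and (ii) $P([X,Y],Z)=0$ for every $Z\in\cppp$. I will show that (i) is equivalent to $\mathfrak{R}(X,Y)=0$ and, once (i) is assumed, that (ii) is equivalent to the stated identity involving $(D_{JZ}R)(X,Y)$.

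For (i), since $X,Y\in\N_P$ are horizontal, Lemma \ref{car.[]}\textbf{(c)} gives
$$[X,Y]=h(D_XY-D_YX)-\mathfrak{R}(X,Y).$$
Because $Dh=0$, both $D_XY$ and $D_YX$ are already horizontal, so $h(D_XY-D_YX)=D_XY-D_YX$ is the horizontal part and $-\mathfrak{R}(X,Y)$ is the vertical part of $[X,Y]$. Thus $[X,Y]$ is horizontal if and only if $\mathfrak{R}(X,Y)=0$, and in that case $[X,Y]=D_XY-D_YX$.

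Assuming $\mathfrak{R}(X,Y)=0$, I will invoke the Bianchi identity Lemma \ref{bianchi}\textbf{(f)}:
$$(D_{hX}P)(Y,Z)-(D_{hY}P)(X,Z)+(D_{JZ}R)(X,Y)=P(X,F\mathcal{C}'(Y,Z))-P(Y,F\mathcal{C}'(X,Z))+R(F\mathcal{C}(Y,Z),X)-R(F\mathcal{C}(X,Z),Y)-Q(F\mathfrak{R}(X,Y),Z).$$
Since $X,Y\in\N_P$ and $\mathfrak{R}(X,Y)=0$, the two $P$-terms on the right vanish as do $Q(F\mathfrak{R}(X,Y),Z)$. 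On the left, expanding covariant derivatives and using $P(X,\cdot)=P(Y,\cdot)=0$ together with the horizontality of $X$ and $Y$ (so $D_{hX}=D_X$ on horizontal slots) gives $(D_{hX}P)(Y,Z)=-P(D_XY,Z)$ and similarly for the $Y$-term, so the left-hand side collapses to $-P(D_XY-D_YX,Z)+(D_{JZ}R)(X,Y)=-P([X,Y],Z)+(D_{JZ}R)(X,Y)$. Rearranging and using antisymmetry of $R$ in its first two arguments yields
$$P([X,Y],Z)=(D_{JZ}R)(X,Y)+R(X,F\mathcal{C}(Y,Z))-R(Y,F\mathcal{C}(X,Z)).$$
Hence $P([X,Y],Z)=0$ for every $Z$ is equivalent to the stated relation, completing both directions of the biconditional.

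The only delicate point is the reduction of the Bianchi left-hand side: one must check that the derivative of $P(Y,Z)$ along $hX$ really vanishes for $Y\in\N_P$ and that $D_{hX}Y-D_{hY}X$ is exactly $[X,Y]$ when $\mathfrak{R}(X,Y)=0$. The first follows from $Y$ being a local section of $\N_P$, so $P(Y,\cdot)$ vanishes identically on $U$, not just pointwise; the second is precisely what was obtained from Lemma \ref{car.[]}\textbf{(c)} in step (i). Once these bookkeeping issues are settled, the whole argument is a single application of Bianchi \textbf{(f)}.
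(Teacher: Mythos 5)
Your proof is correct and follows essentially the same route as the paper: both reduce horizontality of $[X,Y]$ to $\mathfrak{R}(X,Y)=0$ via Lemma \ref{car.[]}\textbf{(c)} and then apply the Bianchi identity of Lemma \ref{bianchi}\textbf{(f)}, using $P(X,\cdot)=P(Y,\cdot)=0$ to collapse it to $P([X,Y],Z)=(D_{JZ}R)(X,Y)+R(X,F\mathcal{C}(Y,Z))-R(Y,F\mathcal{C}(X,Z))$. Your write-up is in fact a bit more explicit than the paper's about the antisymmetry of $R$ and about $P(Y,\cdot)$ vanishing identically on $U$ rather than only pointwise.
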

\begin{proof} Let  $X, Y\in \N_P$. Then,   $\mathfrak{R}(X,Y)=0$  and $ (D_{JZ}R)(X,Y)=R(Y,F{\mathcal{C}}(X,Z))-R(X,F{\mathcal{C}}(Y,Z))$, $ \forall Z\in \cppp$. As $\mathfrak{R}(X,Y)=0$, then the bracket
 $[hX,hY]$ is horizontal. Making use of Lemma \ref{bianchi} \textbf{(f)} and Lemma \ref{car.[]} \textbf{(c)}, we get
 \begin{eqnarray*}
 % \nonumber to remove numbering (before each equation)
  ( D_{hX}P)(Y,Z)-(D_{hY}P)(X,Z)=0&\Longrightarrow&P(D_{X}Y-D_{Y}X,Z)=0 \\
    &\Longrightarrow& P([X,Y]+\mathfrak{R}(X,Y),Z)=0 \\
    &\Longrightarrow& P([X,Y],Z)=0\\
    &\Longrightarrow&[X,Y]\in \N_P.
 \end{eqnarray*}
Hence $\N_P$ be completely integrable.

Conversely, let $\N_P$ be completely integrable. Then, if $X,Y\in \N_P$,  the bracket $[hX,hY]$ is horizontal, thus, $\mathfrak{R}(X,Y)=0$. Also,  by Lemma \ref{bianchi} \textbf{(f)} and the fact   $P([hX,hY],Z)=( D_{hX}P)(Y,Z)-(D_{hY}P)(X,Z)=0$,  we have $ (D_{JZ}R)(X,Y)=R(Y,F{\mathcal{C}}(X,Z))-R(X,F{\mathcal{C}}(Y,Z)),\,  \forall X,Y \in \N_P, \, \forall Z\in \cppp$.
\end{proof}

\begin{rem}
\em{ The  class of Finsler spaces with vanishing  h-curvature  satisfy the conditions of Theorem \ref{p.is.c.i.}. Consequently, for such spaces,  $\N_P$ is completely integrable. }
\end{rem}

Moreover, we have

\begin{prop} A sufficient  condition for  $\N_P$ to be  completely integrable is that  $$\N_P\subset N_R.$$
\end{prop}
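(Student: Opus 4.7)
The plan is to reduce this to the characterization of complete integrability already established in Theorem~\ref{p.is.c.i.}. Assuming $\N_P\subset\N_R$, I take arbitrary $X,Y\in\N_P$ (which then lie in $\N_R$) and verify the two conditions of that theorem: (i) $\mathfrak{R}(X,Y)=0$, and (ii) $(D_{JZ}R)(X,Y)=R(Y,F\mathcal{C}(X,Z))-R(X,F\mathcal{C}(Y,Z))$ for every $Z\in\cppp$.

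Condition (i) will be immediate from the chain of inclusions $\N_P\subset\N_R\subset\N_\mathfrak{R}$, the second being Proposition~\ref{cartan.nul}(b). Since $X\in\N_\mathfrak{R}$ we get $\mathfrak{R}(X,Y)=0$ at once.

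For condition (ii) I would argue that both sides vanish individually. On the right-hand side each summand has either $X$ or $Y$ in its second argument; using the skew-symmetry $R(U,V)=-R(V,U)$ of the Cartan h-curvature to bring that argument into the first slot, the nullity property $X,Y\in\N_R$ makes both summands vanish. On the left-hand side I expand the covariant derivative via the tensorial product rule,
$$(D_{JZ}R)(X,Y)=D_{JZ}\bigl(R(X,Y)\bigr)-R(D_{JZ}X,Y)-R(X,D_{JZ}Y),$$
and observe: the first term is zero because $R(X,Y)=0$ (as $X\in\N_R$); the third term is zero for the same reason; and the middle term equals $-R(Y,D_{JZ}X)$ after skew-symmetry, which vanishes because $Y\in\N_R$. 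With both (i) and (ii) established, Theorem~\ref{p.is.c.i.} delivers the complete integrability of $\N_P$.

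There is no real obstacle in the argument; it is essentially a bookkeeping check that every term appearing in conditions (i) and (ii) has at least one slot occupied by an element of $\N_R$. The only point that deserves explicit mention is the reliance on the skew-symmetry $R(X,Y)=-R(Y,X)$, a standard property of the curvature tensor of a linear connection, which is what allows the nullity hypothesis to be transferred from the second to the first argument of $R$ whenever needed.
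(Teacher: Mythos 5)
Your proof is correct, and it follows the paper's overall strategy of reducing the statement to Theorem~\ref{p.is.c.i.}: condition (i) is obtained in both cases from the chain $\N_P\subset\N_R\subseteq\N_{\mathfrak{R}}$ (Proposition~\ref{cartan.nul}~\textbf{(b)}). Where you diverge is in the verification of condition (ii). The paper simply cites the Bianchi identity of Lemma~\ref{bianchi}~\textbf{(f)} to assert that $(D_{JZ}R)(X,Y)=R(Y,F\mathcal{C}(X,Z))-R(X,F\mathcal{C}(Y,Z))$; but deducing this cleanly from \textbf{(f)} requires knowing that $(D_{hX}P)(Y,Z)-(D_{hY}P)(X,Z)$ vanishes, and that quantity equals $-P([X,Y],Z)$ (by Lemma~\ref{car.[]}~\textbf{(c)}), which is essentially the conclusion being sought -- so the paper's justification of (ii) is at best terse. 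Your route sidesteps the Bianchi identity altogether: using the skew-symmetry $R(X,Y)=-R(Y,X)$ (manifest from Lemma~\ref{car.r,p,q}~\textbf{(a)}) together with $X,Y\in\N_R$, you show every term on both sides of condition (ii) vanishes individually, including the expansion of $(D_{JZ}R)(X,Y)$ by the product rule. This is more elementary and, in fact, more airtight than the printed argument. The only point worth making explicit is that the vanishing of $D_{JZ}\bigl(R(X,Y)\bigr)$ uses that $X,Y$ are local sections of $\N_P\subset\N_R$, so that $R(X,Y)$ is the zero operator on a neighborhood and not merely at a point; since Theorem~\ref{p.is.c.i.} is stated for vector fields belonging to the distribution, this is harmless.
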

\begin{proof}
Let $\N_P\subset N_R$ and $X,Y \in \N_P, \, Z\in \cppp$. Then,  $X,Y\in \N_R$ and hence $X,Y\in \N_\mathfrak{R}$, consequently, $\mathfrak{R}(X,Y)=0$.  Also, by Lemma \ref{bianchi} \textbf{(f)}, we have $ (D_{JZ}R)(X,Y)=R(Y,F{\mathcal{C}}(X,Z))-R(X,F{\mathcal{C}}(Y,Z)),\,  \forall X,Y \in \N_P, \, \forall Z\in \cppp$. Hence, by Theorem \ref{p.is.c.i.}, $\N_P$ is completely integrable.
\end{proof}

%%%%%%%%%%%%%%%%%%%%%%%%%%%%%%%%%%%%%%%%%%%%%%%%%%%%%%%%%%%%%%%%%%%%%%%%%%%%%%%%%%5%%%%%%%%%%%%%%%%%%%%%%%%%%%%%%%%%%%%%&&&&&&&&&&&&&&&&&&&&&&&&&&&&&

\Section{Nullity distribution of Cartan v-curvature}

In this section, we  study  the nullity distribution of the v-curvature  $Q$ of  Cartan connection.
The  nullity distribution  of $Q$ is defined in a similar manner as that  of $R$ (Definition \ref{nr} )

\begin{prop}\label{Q.nullity}The  nullity distribution of $Q$ satisfies:
\begin{description}
\item[(a)]$\N_Q\neq \phi$.
  \item[(b)]$S\in \N_Q$.
  \item[(c)] If $Z\in \N_Q$, then $Q(X,Y)Z=0, \forall X,Y\in \cppp$. \\That is,  $Q(X,Y)Z$ vanishes whenever $X$, $Y$ or $Z$ is a $Q$-nullity vector field.
  \item[(d)] If $X,Y\in\N_Q(z)$, then $F[JX,JY]\in\N_Q$.
\end{description}
\end{prop}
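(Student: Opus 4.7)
Parts (a) and (b) are essentially immediate from Lemma~\ref{car.curv.}(d): since $Q(S,X)Y=0$ for every $X,Y\in\mathfrak{X}(TM)$, and the canonical spray $S$ is horizontal with respect to the Barthel connection, one has $S\in\N_Q$. This establishes (b), and (a) follows a fortiori.

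For part (c), the key observation is that the formula
\[
Q(X,Y)Z=\mathcal{C}(F\mathcal{C}(X,Z),Y)-\mathcal{C}(F\mathcal{C}(Y,Z),X)
\]
of Lemma~\ref{car.r,p,q}(c) makes $Q$ skew-symmetric in its first two arguments. Given $Z\in\N_Q$, one has $Q(Z,X)Y=0$ by definition and $Q(Y,Z)X=-Q(Z,Y)X=0$ by the skew-symmetry. Substituting into the Bianchi identity $\mathfrak{S}_{X,Y,Z}Q(X,Y)Z=0$ of Lemma~\ref{bianchi}(b) then isolates $Q(X,Y)Z=0$, as required.

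The bulk of the proof is (d), for which the tool is the Bianchi identity of Lemma~\ref{bianchi}(h). Given $X,Y\in\N_Q(z)$, I would first use Lemma~\ref{car.[]}(a) to write $[JX,JY]=J(D_{JX}Y-D_{JY}X)$, so that $F[JX,JY]=h(D_{JX}Y-D_{JY}X)$; because $Dh=0$ and $X,Y$ are horizontal, $D_{JX}Y$ and $D_{JY}X$ are themselves horizontal, whence $F[JX,JY]=D_{JX}Y-D_{JY}X$. Now fix arbitrary $W,V\in\mathfrak{X}(TM)$, apply Lemma~\ref{bianchi}(h) to the triple $(X,Y,W)$, and evaluate on $V$:
\[
(D_{JX}Q)(Y,W)V+(D_{JY}Q)(W,X)V+(D_{JW}Q)(X,Y)V=0,
\]
expanding each term via the Leibniz rule for the $(1,3)$-tensor $Q$.

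The hypothesis $X,Y\in\N_Q$, combined with the skew-symmetry established in (c), kills every subterm in which a $Q$-expression has $X$ or $Y$ in its first or second slot; what survives collapses to $-Q(D_{JX}Y,W)V+Q(D_{JY}X,W)V=0$, i.e.\ $Q(F[JX,JY],W)V=0$, so $F[JX,JY]\in\N_Q$. The delicate point — and the main obstacle — is the Leibniz bookkeeping for the three cyclic terms, tracking which subterms are annihilated directly by $X,Y\in\N_Q$ and which require the skew-symmetry of (c) to vanish, as well as the sign balance between the two surviving terms and $F[JX,JY]=D_{JX}Y-D_{JY}X$.
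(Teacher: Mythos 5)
Your proposal is correct and follows essentially the same route as the paper: (b) from Lemma \ref{car.curv.}, (c) from the skew-symmetry of $Q$ in its first two arguments together with the first Bianchi identity of Lemma \ref{bianchi} \textbf{(b)}, and (d) by expanding the Bianchi identity of Lemma \ref{bianchi} \textbf{(h)} via the Leibniz rule, killing terms with $X,Y\in\N_Q$, and identifying the surviving combination $D_{JY}X-D_{JX}Y$ with $\pm F[JX,JY]$ through Lemma \ref{car.[]} \textbf{(a)}. The only difference is that you spell out the Leibniz bookkeeping and the horizontality of $D_{JX}Y$, $D_{JY}X$, which the paper leaves implicit; the overall sign you flag is immaterial since the expression vanishes.
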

\begin{proof}~\par

\noindent \textbf{(b)} Follows from the fact that  $Q(S,X)Y=0$. (Lemma \ref{car.curv.} \textbf{(d)})

\noindent \textbf{(c)} Follows from  Lemma \ref{bianchi} \textbf{(b)}.

\noindent \textbf{(d)} Let $ X,Y\in \N_Q$, then Propositions  \ref{Q.nullity} and Lemma \ref{bianchi} \textbf{(h)} lead to
\begin{eqnarray*}
% \nonumber to remove numbering (before each equation)
  0 &=& (D_{JX}Q)(Y,Z)+(D_{JY}Q)(Z,X)+(D_{JZ}Q)(X,Y)\\
   &=&  -Q(D_{JX}Y,Z)-Q(Z,D_{JY}X)\\
   &=&Q(D_{JY}X-D_{JX}Y,Z)\\
   &=&Q(F[JX,JY],Z).
\end{eqnarray*}
Since $[JX,JY]$ is vertical and $FJ=h$, hence $F[JX,JY]$ is horizontal. Consequently,  $F[JX,JY]\in\N_Q$.
\end{proof}

The nullity distribution $\N_Q$ is not in general completely integrable as shown  by the following example.

Let
$E=x_4y_1(y_2^3+y_3^3+y_4^3)^{1/3}$.
Then, we have:

\begin{example} \em{$M=\{x=(x_1,x_2,x_3,x_4)\in \mathbb{R}^4: x_2\neq0\}$, \\
$U=\{(x,y)\in\mathbb{R}^4\times \mathbb{R}^4:x_2\neq 0;\,y_1,y_3,y_4\neq 0\}\subset TM$.\\
Let the energy function $E$ be defined on $U$ by $E=x_2y_1^2e^{-y_3/y_4}+y_2^2$.

\vspace{5pt}
Then, the independent non-vanishing components of the v-curvature $Q^h_{ijk}$ of Cartan connection  are:
   $$Q^3_{113}=\frac{-y_3}{2y_1^2y_4 },\quad\quad Q^4_{113}=\frac{-1}{2y_1^2},\quad\quad Q^3_{114}=\frac{y_3^2}{2y_1^2y_4^2 },\quad\quad Q^4_{114}=\frac{y_3}{2y_1^2y_4},$$
  $$Q^3_{134}=\frac{-y_3}{2y_1y_4^2 },\quad\quad Q^4_{134}=\frac{-1}{2y_1y_4},\quad\quad Q^3_{313}=\frac{-1}{2y_1y_4},\quad\quad Q^1_{314}=\frac{y_3}{4y_4^3},$$
   $$Q^1_{331}=\frac{-1}{4y_4^2 },\quad\quad Q^1_{334}=\frac{-y_1}{4y_4^3 },\quad\quad Q^3_{334}=\frac{-1}{2y_1^2},\quad\quad Q^3_{341}=\frac{-y_3}{2y_1^2y_4},$$
  $$Q^1_{413}=\frac{y_3}{4y_4^3},\quad\quad,Q^3_{413}=\frac{y_3}{y_1y_4^2 },\quad\quad Q^1_{414}=\frac{-y_3^2}{4y_4^4 },\quad\quad Q^3_{414}=\frac{-y_3}{y_1y_4^2 },$$
   $$Q^4_{414}=\frac{-y_3}{2y_1y_4^2 },\quad Q^4_{413}=\frac{1}{2y_1^2y_4 },\quad Q^4_{423}=\frac{1}{2y_1y_4},\quad Q^1_{434}=\frac{y_1y_3}{4y_4^4 },\quad Q^3_{434}=\frac{y_3}{y_4^3 }.$$
  Now, let $X\in \N_{Q}$, the equation  ${Q}(X,Y)Z=0$, $\forall \, Y,Z\in H(TM)$,  is written locally in the form
  $$X^j{Q}^h_{ijk}=0.$$
This is equivalent to the   system of equations:
$$y_4X^3-y_3X^4=0,$$
$$y_4X^1-y_1X^4=0,$$
$$y_3X^1-y_1X^3=0.$$
From  the above system,   we have   $X^2=t,\, X^4=t',\, X^1=\frac{y_1}{y_4}t',\, X^3=\frac{y_3}{y_4}t',\,t,t'\in \mathbb{R}$.    Hence,  $X=th_2+t'(\frac{y_1}{y_4} h_1+\frac{y_3}{y_4}h_3+h_4)$ and  $\mu_{Q}=2$. Now, let $X, Y\in \N_Q$ be such that  $X=h_2$ and $Y=\frac{y_1}{y_4} h_1+\frac{y_3}{y_4}h_3+h_4$. Then,  the bracket $[X,Y]=[h_2,\frac{y_1}{y_4} h_1+\frac{y_3}{y_4}h_3+h_4]=-\frac{y_1y_2}{2x_2^2y_4}\frac{\partial}{\partial y_1}+\frac{y_1^2(5y_3-2y_4)}{4x_2y_4^2}e^{-y_3/y_4}\frac{\partial}{\partial y_2}+\frac{y_4}{2x_2^2}\frac{\partial}{\partial y_4}$,  which is vertical. Consequently, the nullity distribution $\N_Q$ is not complectly integrable.  \qed
}
\end{example}

Nevertheless, we have

\begin{thm}\label{q.is.c.i}
Let  $\mu_Q$ be constant on an open subset $U$ of \,$TM$. The  nullity distribution
 $\N_Q$  is completely integrable on $U$ if and only if, for all $X,Y \in\N_Q$, $ \mathfrak{R}(X,Y)=0$ and the tensor
    $$ A(X,Y,Z):=P(F{\mathcal{C}}(Z,X),Y)-(D_{JX}P)(Y,Z)
 -(D_{JZ}P)(X,Y), \, \forall Z\in \cppp $$
is symmetric  in $X$ and $Y$.
\end{thm}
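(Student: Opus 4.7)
The plan parallels the proof of Theorem \ref{p.is.c.i.}, using the $(h,v,v)$-Bianchi identity Lemma \ref{bianchi}(g) in place of its $(h,h,v)$ counterpart. Since $\N_Q \subset H(TM)$, the assertion $[X,Y]\in\N_Q$ for $X,Y\in\N_Q$ splits into two tasks: showing that $[X,Y]$ is horizontal, and that $Q([X,Y],Z)=0$ for every $Z\in\cppp$. By Remark \ref{barthelr}, the first task reduces precisely to $\mathfrak{R}(X,Y)=0$, which accounts for the first condition in the theorem.

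For the vanishing of $Q([X,Y],Z)$, the strategy is to apply Lemma \ref{bianchi}(g) once as stated and once with $X$ and $Y$ exchanged, and take the difference. On the left-hand side, the nullity hypothesis $X,Y\in\N_Q$ (together with the antisymmetry of $Q$ in the first two slots and Proposition \ref{Q.nullity}(c)) implies $Q(Y,\cdot)=Q(X,\cdot)=0$ as operators, so $(D_{hX}Q)(Y,Z)=-Q(D_{hX}Y,Z)$ and $(D_{hY}Q)(X,Z)=-Q(D_{hY}X,Z)$. Their difference collapses to $-Q(D_{hX}Y-D_{hY}X,Z)$; invoking the Cartan torsion formula $T(hX,hY)=\mathfrak{R}(X,Y)$ (Lemma before \ref{car.r,p,q}) together with $\mathfrak{R}(X,Y)=0$ and $hX=X, hY=Y$ yields $D_{hX}Y-D_{hY}X=[X,Y]$, so the left-hand side equals $-Q([X,Y],Z)$.

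On the right-hand side, the symmetry $\mathcal{C}(X,Y)=\mathcal{C}(Y,X)$ cancels the pair $P(F\mathcal{C}(X,Y),Z)-P(F\mathcal{C}(Y,X),Z)$, the symmetry of $\mathcal{C}'$ cancels the analogous $Q$-pair, and Proposition \ref{Q.nullity}(c) kills both $Q(F\mathcal{C}'(Z,X),Y)$ and $Q(F\mathcal{C}'(Z,Y),X)$ since $Y$ and $X$ respectively occupy the third slot of $Q$. The surviving terms assemble into the symmetry defect of $A$, yielding the key identity
\[
Q([X,Y],Z) \;=\; A(X,Y,Z)-A(Y,X,Z).
\]
From this identity, sufficiency is immediate: the two hypotheses force $[X,Y]$ to be horizontal and $Q([X,Y],Z)=0$, so $[X,Y]\in\N_Q$, and the Frobenius theorem applies since $\mu_Q$ is constant on $U$. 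Necessity runs in reverse: complete integrability forces $[X,Y]\in H(TM)$ (hence $\mathfrak{R}(X,Y)=-v[hX,hY]=0$ by Remark \ref{barthelr}) and $Q([X,Y],Z)=0$, which via the boxed identity is precisely the symmetry of $A$ in its first two arguments.

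The main obstacle is the bookkeeping in the subtraction step: Lemma \ref{bianchi}(g) is not manifestly symmetric in $X,Y$, so one must carefully track which right-hand-side terms vanish by the symmetries of $\mathcal{C}$ and $\mathcal{C}'$ (operating on matched pairs), which vanish because some argument of $Q$ lies in $\N_Q$, and which survive to form the symmetry defect of $A$. A secondary subtlety is that horizontality of $[X,Y]$ is not implied by $Q([X,Y],Z)=0$, so the condition $\mathfrak{R}(X,Y)=0$ must be imposed separately rather than being a consequence of the $A$-symmetry condition.
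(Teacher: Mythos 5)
Your proposal is correct and follows essentially the same route as the paper: both apply the Bianchi identity of Lemma \ref{bianchi}\,\textbf{(g)} twice with $X$ and $Y$ interchanged, subtract, use the symmetries of $\mathcal{C}$ and $\mathcal{C}'$ together with Proposition \ref{Q.nullity}\,\textbf{(c)} to reduce the difference to $(D_{hX}Q)(Y,Z)-(D_{hY}Q)(X,Z)=A(X,Y,Z)-A(Y,X,Z)$, and then identify the left-hand side with $\pm\,Q([X,Y]+\mathfrak{R}(X,Y),Z)$ via Lemma \ref{car.[]}\,\textbf{(c)}, handling horizontality of $[X,Y]$ through the separate condition $\mathfrak{R}(X,Y)=0$. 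The converse is argued identically in both.
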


\begin{proof}
Let $ X,Y\in \N_Q$. Then,  $ \mathfrak{R}(X,Y)=0$ and the tensor $A(X,Y,Z)$ is symmetric in the first two arguments.
   By  Lemma \ref{bianchi} \textbf{(g)}, we have
\begin{eqnarray}
 % \nonumber to remove numbering (before each equation)
\nonumber(D_{hX}Q)(Y,Z)&=&(D_{JY}P)(X,Z)-(D_{JZ}P)(X,Y)+P(F{\mathcal{C}}(X,Y),Z)\\
 \label{Q.c.i.1} &&-P(F{\mathcal{C}}(Z,X),Y)-Q(F\mathcal{C}'(X,Y),Z).
\end{eqnarray}
Interchange $X$ with $Y$ in the above equation, we get
\begin{eqnarray}
 % \nonumber to remove numbering (before each equation)
\nonumber (D_{hY}Q)(X,Z)&=&(D_{JX}P)(Y,Z)-(D_{JZ}P)(Y,X)+P(F{\mathcal{C}}(Y,X),Z)\\
\label{Q.c.i.2}  &&-P(F{\mathcal{C}}(Z,Y),X)-Q(F\mathcal{C}'(Y,X),Z).
\end{eqnarray}
Making use of the symmetry of $\mathcal{C}$ and $\mathcal{C}'$, Equations  (\ref{Q.c.i.1}) and (\ref{Q.c.i.2}) give
\begin{equation}\label{axyz}
(D_{hX}Q)(Y,Z)-(D_{hY}Q)(X,Z)=A(X,Y,Z)-A(Y,X,Z).
\end{equation}
Then, by the symmetry of $A(X,Y,Z)$ in $X$, $Y$, we get
$$Q(D_{hY}X-D_{hX}Y,Z)=0.$$
Consequently, it follows from Lemma \ref{car.[]} that
$$Q([X,Y]+\mathfrak{R}(X,Y),Z)=0.$$
Since $\mathfrak{R}(X,Y)=0$,  $[hX,hY]=[X,Y]$ is horizontal and so $[X,Y]\in \N_Q$.
  Consequently, $\N_Q(z)$ is completely integrable.

  Conversely, let $\N_Q$ be completely integrable. Then, for all $ X,Y \in \N_Q$,     the bracket $[hX,hY]\in \N_Q$, i.e., $[hX,hY]$ is horizontal and hence $\mathfrak{R}(X,Y)=0$. Moreover, by (\ref{axyz}) and the fact that $Q([hX,hY],Z)=0$,  the tensor $A(X,Y,Z)$ is symmetric in $X$ and $Y$.
\end{proof}
\begin{rem}
\em{ The  class of  Minkoweski spaces satisfies  the conditions of the above theorem. Consequently, a Minkoweski space has a  completely integrable  $\N_Q$. }
\end{rem}

\begin{defn}
Let $(M,E)$ be a Finsler manifold. The angular metric $\hbar$ on $TM$  is defined by
$$\hbar(X,Y)=g(X,Y)-\ell(X)\ell(Y),$$
where $g$ is the  metric tensor on $TM$ given by (\ref{metricg}) and $\ell(X):=\frac{1}{\sqrt{2E}}g(X,C)$.
\end{defn}
It should be noted that the trace of $\hbar$  is $(2n-1)$.
\begin{defn} A Finsler space $(M,E)$ of $\dim \geq 4$ is said to be $S_3$-like if
$$Q(X,Y,Z,W)=r\{\hbar(JX,JZ)\hbar(JY,JW)-\hbar(JX,JW)\hbar(JY,JZ)\},$$
where $Q(X,Y,Z,W)=g(Q(X,Y)Z,JW)$ and  $r$ is a scalar function.
\end{defn}

\begin{thm}
Let $(M,E)$ be an $S_3$-like space. Then, the index of nullity $\mu_Q$ takes its maximal value.
\end{thm}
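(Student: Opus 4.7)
The plan is to show that the scalar function $r$ in the $S_3$-like hypothesis must vanish; once $r = 0$, the defining identity forces $Q \equiv 0$, so $\N_Q = H(TM)$ and $\mu_Q$ attains its maximal value $n$. The argument is designed to run in direct parallel with the proof of the preceding $h$-isotropic theorem.

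I would begin by taking a nonzero nullity vector $X \in \N_Q$; one exists by Proposition \ref{Q.nullity} since $S \in \N_Q$. Feeding the relation $Q(X,Y)Z = 0$ into the $S_3$-like definition yields
\[ 0 = r\,\{\hbar(JX, JZ)\,\hbar(JY, JW) - \hbar(JX, JW)\,\hbar(JY, JZ)\}, \qquad \forall\, Y, Z, W \in \cppp. \]
The next step is to take the trace of this identity in the pair $(Y,W)$ with respect to $g$. Working in a $g$-orthonormal local frame of $T(TM)$ adapted to the splitting $H(TM) \oplus V(TM)$, $J$ sends the horizontal half to a $\overline{g}$-orthonormal basis of $V(TM)$ and annihilates the vertical half. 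A short computation using $\overline{g}(C,C) = 2E$ then gives
\[ \sum_i \hbar(Je_i, Je_i) = n-1 \quad\text{and}\quad \sum_i \hbar(JX, Je_i)\,\hbar(Je_i, JZ) = \hbar(JX, JZ), \]
so the traced identity reduces to
\[ r\,(n-2)\,\hbar(JX, JZ) = 0, \qquad \forall\, Z \in \cppp. \]
Invoking nondegeneracy of $\hbar$ in directions transverse to $C$, we choose $Z$ with $\hbar(JX, JZ) \ne 0$; this forces $r(n-2) = 0$, and since $n \geq 4$ by hypothesis we conclude $r = 0$. Hence $Q \equiv 0$ and $\mu_Q = n$.

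The main obstacle is the very last step: one must pick a nullity vector $X$ whose image $JX$ lies outside the one-dimensional radical $\text{span}(C)$ of $\hbar$ on $V(TM)$. The canonical element $S \in \N_Q$ has $JS = C$, which makes the traced identity tautologically true and therefore useless; the argument accordingly relies on the availability of a nullity vector transverse to $\text{span}(S)$, exactly mirroring the way the $h$-isotropic proof depends on $g(X, Z)$ being nontrivial for its fixed nullity vector.
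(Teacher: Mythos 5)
Your proposal follows essentially the same route as the paper: substitute a nullity vector into the $S_3$-like identity, trace, and force $r=0$, whence $Q=0$ and $\mu_Q=n$. The only cosmetic difference is the order of contraction --- you trace over $(Y,W)$ and then appeal to nondegeneracy of $\hbar$ off $\mathrm{span}(C)$, while the paper traces over $(X,Z)$ first and then over $(Y,W)$. The ``obstacle'' you flag at the end is, however, a genuine one and you are right to worry about it: the only element of $\N_Q$ guaranteed by Proposition \ref{Q.nullity} is $S$, and $JS=C$ lies in the radical of $\hbar$, so the identity you trace is vacuous for that choice; your argument therefore only proves the theorem under the additional hypothesis that $\N_Q$ contains a vector transverse to $\mathrm{span}(S)$. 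Be aware that the paper's own proof has exactly the same defect, in a slightly more hidden form: it ``takes the trace with respect to $JX$ and $JZ$'' even though $X$ is a fixed nullity vector, which is only legitimate if the identity already holds for arbitrary $X$ --- i.e.\ if one already knows what is to be proved. So your write-up is faithful to the paper's argument and, if anything, more candid about where it is fragile; to close the gap one would need an independent reason why an $S_3$-like space admits a $Q$-nullity vector outside $\mathrm{span}(S)$, which neither you nor the paper supplies.
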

\begin{proof}
Let $(M,E)$ be an $S_3$-like space and $X\in \N_Q$, then we have
$$r\{\hbar(JX,JZ)\hbar(JY,JW)-\hbar(JX,JW)\hbar(JY,JZ)\}=0.$$
Taking the trace with respect to $JX$ and $JZ$ , we get
$$(2n-2)r\hbar(JY,JW)=0.$$
Again, taking the trace of the above equation, we have
$$(2n-1)(n-1)r=0.$$
As $n\geq 4$, then $r=0$ and consequently  $Q=0$.
\end{proof}

%5\begin{defn}
%The kernel of the v-curvature $Q$, denoted by $\ker_Q(z)$, is defined by
%$$\ker_Q(z)=\{Z\in T_zTM:\, Q(X,Y)Z=0,\,\, \forall X,Y\in T_zTM\}.$$
%\end{defn}
%\begin{prop}For all $z\in TM$ the space of nullity $\N_Q(z)$ defines
%\begin{description}
 % \item[(a)]$\N_Q(z)\subset \ker_Q(z)\cap H_z(TM).$
  %\item[(b)] $\ker_Q(z)\supset \N_Q(z)\oplus V_zTM.$
%\end{description}
%\end{prop}
%\begin{proof}
%The proof is clear and we omit it.
%\end{proof}
%%%%%%%%%%%%%%%%%%%%%%%%%%%%%%%%%%%%%%%%%%%%%%%%%%%%%%%%%%%%%%%%%%%%%%%%%%%%%%%%%%%%%%%%%%%%%%%%%%%%%%%%%%%%%%%%&&&&&&&&&&&&&&&&&&&&&&&&&&&&&&&&&&&&&&&&&&&&
\providecommand{\bysame}{\leavevmode\hbox
to3em{\hrulefill}\thinspace}
\providecommand{\MR}{\relax\ifhmode\unskip\space\fi MR }
% \MRhref is called by the amsart/book/proc definition of \MR.
\providecommand{\MRhref}[2]{%
  \href{http://www.ams.org/mathscinet-getitem?mr=#1}{#2}
} \providecommand{\href}[2]{#2}

%$$R^2_{123}=\frac{-9y_3^2}{32x_4^2y_1y_4}=-R^2_{132},\,\, R^2_{124}=\frac{3E_1}{32x_4^2y_1y_4^2}=-R^2_{142},\,\, R^3_{123}=\frac{9y_2^2}{32x_4^2y_1y_4}=-R^3_{132},$$
%$$R^3_{134}=\frac{-3E_1}{32x_4^2y_1y_4^2}=-R^3_{143},\, R^4_{124}=\frac{-3y_2^2E_1}{32x_4^2y_1y_4^4}=-R^4_{142},\, R^4_{134}=\frac{-3y_3^2E_1}{32x_4^2y_1y_4^4}=-R^4_{143},$$
%$$R^1_{223}=\frac{9y_1y_2y_3^2}{16x_4^2y_4E_2}=-R^1_{232},\, R^1_{224}=\frac{-3y_1y_2E_1}{16x_4^2y_4^2E_2}=-R^1_{242},\, R^2_{223}=\frac{9y_2^2y_3^2}{32x_4^2y_4E_2}=-R^2_{232},$$
%$$R^2_{224}=\frac{-3y_2^2E_1}{32x_4^2y_4^2E_2}=-R^2_{242},\, R^3_{223}=\frac{9y_1y_2E_1}{16x_4^2y_4^2E_2}=-R^3_{232},\, R^2_{223}=\frac{9y_2^2y_3^2}{32x_4^2y_4E_2}=-R^2_{232},$$

\end{document}